\documentclass[pdflatex,sn-mathphys-num]{sn-jnl}% Math and Physical Sciences Numbered Reference Style

\usepackage[T1]{fontenc}
\usepackage[utf8]{inputenc}

\usepackage{graphicx}%
\usepackage{multirow}%
\usepackage{amsmath,amssymb,amsfonts}%
\usepackage{amsthm}%
\usepackage{mathrsfs}%
\usepackage[title]{appendix}%
\usepackage{xcolor}%
\usepackage{textcomp}%
\usepackage{manyfoot}%
\usepackage{booktabs}%
\usepackage{algorithm}%
\usepackage{algorithmicx}%
\usepackage{algpseudocode}%
\usepackage{listings}%

\usepackage{tikz}
\usetikzlibrary{math}  % To define variable quantities
\usetikzlibrary{arrows.meta}  % nice arrowheads
\usepackage{pgfplots}
\usetikzlibrary{plotmarks}

\definecolor{purpleG}{rgb}{0.406, 0, 0.566}

\usepackage{dsfont}   % ds font for \one
\usepackage{mathtools} % for \coloneqq :=
\usepackage{enumitem}% for typographic control of \begin{enumerate}
	\usepackage{hyperref}% for links
	\usepackage{subcaption}% for subfigure

	\theoremstyle{plain}
	\newtheorem{theorem}{Theorem}
	\newtheorem*{theorem*}{Theorem}
	\newtheorem{proposition}[theorem]{Proposition}
	\newtheorem{lemma}[theorem]{Lemma}
	\newtheorem{corollary}[theorem]{Corollary}
	\theoremstyle{definition}
	\newtheorem{definition}{Definition}

	\theoremstyle{remark}
	\newtheorem{remark}{Remark}
	
	\newcommand{\one}{\mathds{1}}        % vector of all 1
	\DeclareMathOperator{\inv}{inv}

	\newcommand{\errT}{{\mathrm{err}}_{T}}  % backward error
	\newcommand{\erry}{{\mathrm{err}}_{y}}   % forward error
	
	\newcommand{\din}{d_{\mathrm{in}}}          % vector of the in-degrees
	\newcommand{\dout}{d_{\mathrm{out}}}     % vector of the out-degrees
	
	\newcommand{\dmin}{d_{\min}}      % minimum degree
	\newcommand{\dmax}{d_{\max}}     % maximum degree

\begin{document}
		
		\title{Convergence and acceleration of a nonlinear fixed-point iteration for computing the Fitness Centrality of general graphs}
	
	\author*[1]{\fnm{Nikita} \sur{Deniskin}}\email{nikita.deniskin@sns.it}
	
	\author[1]{\fnm{Michele} \sur{Benzi}}\email{michele.benzi@sns.it}
	
	\affil[1]{\orgdiv{Faculty of Sciences}, \orgname{Scuola Normale Superiore}, \orgaddress{\street{Piazza dei Cavalieri, 7}, \city{Pisa}, \postcode{56126}, \country{Italy}}}

	\abstract{We establish the global convergence of the (non-homogeneous) Fitness Centrality algorithm for general graphs, deriving an
		explicit convergence bound for the corresponding fixed-point iteration. Furthermore, we show how the convergence can be dramatically
		improved by Anderson acceleration and by switching to Newton's method once a sufficiently good approximation to the fixed point has been found.
		The efficacy of this strategy is illustrated by numerical experiments on different types of graphs.}
	
	\keywords{fitness centrality, fixed-point iteration, Newton's method, Anderson acceleration}
	
	\pacs[MSC Classification]{05C50, 65B05}
	
	% 	05C50   	Graphs and linear algebra (matrices, eigenvalues, etc.)
	% 65B05   	Acceleration of convergence in numerical analysis - Extrapolation to the limit, deferred corrections 
	
	\maketitle
	
	\section{Introduction}
	One of the most fundamental problems in Network Science is that of {\it node centrality}, i.e., identifying the most ``important" nodes
	in a given network according to some definition of importance \cite{Newman2018}.  Common criteria to consider a node important include
	playing a crucial role in connecting many other nodes with each other, or having a high probability to be visited often in the course 
	of a random walk on the network. Widely used centrality measures of this type are betweenness centrality \cite{Freeman1977}, eigenvector centrality
	\cite{Bonacich1987}, PageRank \cite{BrinPage1998}, Katz centrality \cite{Katz1953}, and subgraph centrality \cite{EstradaRV2005}, to name a few.

	All these measures have found
	extensive use in a number of applications in the natural and social sciences, as well as in engineering. Computing these centralities requires
	finding shortest paths between pairs of nodes in the case of betweenness centrality, or performing linear algebra operations involving the solution
	of linear systems, eigenvalue computations, or evaluation of matrix functions in other cases \cite{BenziBoito2020}.  For large graphs this can
	be computationally expensive, and much effort has been spent in developing efficient algorithms. \\
	
	Centrality measures have also been developed with an eye to identifying weak spots, or vulnerabilities, in complex networks. One such example
	is {\it Fitness Centrality} (FC), first proposed in \cite{FCNetworks2025}; see also the recent PhD thesis \cite{Calo_PhD2026}.
	This measure is rooted on earlier work in the field of economics, see
	\cite{def1EFC2012,def2EFC2013}, where the notion of {\it Economic Fitness Complexity} was originally proposed.  In these works the focus is on
	the bipartite graph whose nodes represent countries and products.  Two coupled nonlinear maps are introduced, whose fixed points measure countries'
	fitness and products' complexity, economic notions which we do not elaborate on here. Coupled iterations are used to compute the two fixed-points,
	and numerical experiments show convergence independently of the initial values, suggesting existence and uniqueness, but no formal proofs are given
	in \cite{def1EFC2012,def2EFC2013}.\\
	
	In \cite{ConvFCAlgorithm2016}, the authors consider the use of the EFC algorithm for a generic bipartite graph and observe that the
	structure of the adjacency matrix must satisfy some restrictions in order for the iterates to define non-zero limiting values. 
	A further step was taken in \cite{regularizedEFC2018}, where a regularization parameter $\delta > 0$ was introduced so as to improve the robustness 
	and stability of the FC algorithm. The resulting method is called {\it  Non-Homogeneous Economic Fitness Complexity}.
	In \cite{regularizedEFC2018}, the authors observed that the fixed points of the algorithm depend mildly on $\delta$ and are virtually independent 
	of it as soon as $\delta$ becomes sufficiently small.  They also showed that the spectral radius of the Jacobian of the nonlinear
	map governing the coupled iteration computed at the fixed points is less than 1, but gave no proof of  global convergence. \\
	
	The close relationship existing between the Fitness Complexity algorithm and the classical Sinkhorn-Knopp iteration was uncovered in \cite{EquivFC_SK2024}.
	The goal of the Sinkhorn-Knopp iteration is to find two positive diagonal matrices, $D_1$ and $D_2$, such that the scaled matrix $D_1AD_2$ is doubly stochastic, where
	$A$ is a given nonnegative matrix  \cite{SK1967}. It is a widely used algorithm with a number of important practical applications \cite{Knight2008}. The essential
	equivalence of the two algorithms helps explain several properties of the Fitness Complexity iteration, including the fact that its convergence depends on
	the structure of the underlying matrix. We note, however, that the regularization technique adopted in \cite{regularizedEFC2018} was not considered in
	\cite{EquivFC_SK2024}, and that the convergence theory for Sinkhorn-Knopp does not apply to Non-Homogeneous Economic Fitness Complexity.\\
	
	In all of the papers on Fitness Complexity and its variants mentioned thus far, the discussion is centered around economic applications and limited to
	bipartite networks. In \cite{FCNetworks2025}, the authors introduce an extension of the algorithm to general, non-bipartite  networks, not necessarily from 
	economics.\footnote{The algorithm in  \cite{FCNetworks2025} also extends the so-called {\it Economic Complexity Index} (ECI), which is used for
		network clustering. For our purposes, ECI need not be discussed further.}
	This extensions leads to the notion of {\it Fitness Centrality}, which the authors describe as a novel centrality measure that can be used
	for assessing node vulnerability. Indeed, extensive numerical experiments show that, on one hand, Fitness Centrality (FC) strongly
	correlates with node degree, while on the other it tends to assign higher values to nodes that are connected to many nodes of low degree.  Since
	removing such nodes from a network results in a highly disconnected network, the authors of  \cite{FCNetworks2025} propose the use of Fitness
	Centrality as a measure of vulnerability. Again, regularization of the iteration (via a parameter $\delta > 0$) is found necessary to ensure robustness of the iterative 
	process and stability of the fixed point. If $A= [A_{ij}]$ denotes the $n\times n$ adjacency matrix of the graph (possibly weighted, directed or undirected), the 
	fixed-point iteration for computing the Fitness Centrality of nodes takes the (componentwise) form
	\begin{equation}\label{eq:NHFC}
		x_i^{(k+1)} = \delta + \sum\limits_{j=1}^n A_{ij} \frac{1}{x_j^{(k)}}, \quad k = 0, 1, \ldots , \quad 1\le i \le n\,,
	\end{equation}
	where $x^{(0)}$ is an initial vector with positive components. \\
	
	In the recent paper \cite{CostFunctions2026}, the authors introduce a framework based on potential functions that allows one to interpret the
	fixed point of the non-homogeneous (i.e., regularized) FC algorithm as the minimizer of a suitable energy function in the case of an undirected graph.
	The authors prove the strict convexity of the energy, from which they 
	obtain existence and uniqueness of the minimum.  The convergence of the nonlinear fixed-point iteration (in its regularized variant) is discussed in
	some special cases, but no global convergence results are given. As the authors write,
	
	\begin{quote}
		While a general proof of convergence is difficult for arbitrary network structures, this property can be investigated empirically by analyzing 
		the behavior of multiple trajectories initialized from different starting points .... The results indicate that, regardless of the initial condition in the phase space, 
		all trajectories converge to the same point. Although a formal proof is lacking, this evidence suggests that global convergence may hold more generally. 
		At the same time, we cannot rule out the existence of nonstationary orbits (e.g., periodic ones) that attract the trajectories ... for suitable choices of the 
		initial configuration.
	\end{quote} 
	
	As an alternative, a steepest descent algorithm for energy minimization is proposed. The robustness and efficiency of this approach, however, remains to be 
	investigated.\\
	
	In this paper we settle the question of convergence for the regularized FC algorithm given by Eq. (\ref{eq:NHFC}) for
	arbitrary graphs, including weighted and directed ones.
	More precisely, we prove linear convergence of (\ref{eq:NHFC}) to the unique fixed point  of the map
	$$T (x) = \delta \one + A \cdot  \inv(x),$$
	where $\one$ is the vector of all ones and $\inv(x)$ is the vector whose components are the reciprocals of the components of $x$ (assumed nonzero), for any initial vector $x^{(0)}$ with positive components. Furthermore, we provide an explicit bound for the rate of convergence. 
	Finally, we show that fast convergence can be obtained by a judicious combination of Anderson acceleration and Newton's method. \\
	
	The remainder of the paper is organized as follows. In Section \ref{sec:prelim} we provide basic definitions and notations used throughout the paper.
	In Section \ref{sec:theory} we carry out  a detailed analysis of the convergence of the fixed-point iteration, while in Section \ref{sec:comp} we discuss techniques to accelerate the convergence, including their computational aspects. Numerical experiments on a variety of graphs are presented in Section \ref{sec:num_exp},
	and conclusive remarks are given in Section \ref{sec:conclusions}.
	
	\section{Preliminaries} \label{sec:prelim}
	We denote by $\mathbb{R}_+$ the set of all positive real numbers. The \emph{positive orthant} of the $n$-dimensional space, i.e., the set of the vectors with all positive components, is denoted by $\mathbb{R}_+^n$. 
	
	\subsection{Graph notions}
	We start by introducing background definitions from graph theory. 
	\begin{definition}
		A \emph{graph}  $G=(V,E,\omega)$ is a triple, where $V$ is the set of \emph{vertices} (or \emph{nodes}), $E\subseteq V \times V$ denotes the \emph{edges}, and $\omega: E \to \mathbb{R}_+$ is the function that assigns weight to the edges. 
	\end{definition}
	We consider a general case where graphs are \emph{directed}, i.e., edges $(i,j)$ and $(j,i)$ may be present or not independently, and \emph{weighted}. The weight of an edge indicates how strong the connection is between its endpoints: the larger the weight, the easier it is to transmit information from one vertex to the other. 
	\begin{definition}
		Let $G=(V,E,\omega)$ be a graph with $|V|=n$ vertices. 
		The \emph{adjacency matrix} of $G$ is the matrix $A\in\mathbb{R}^{n\times n}$ defined by  
		\[A_{ij} = \begin{cases}
			\omega_{ij} & \text{ if } (i,j) \in E\\
			0 & \text{ if } (i,j) \not\in E.
		\end{cases}\]
	\end{definition}
	
	Throughout this paper, $n$ will denote the number of vertices of a graph, and $\one\in\mathbb{R}^n$ will denote the vector of all ones. The graph is \textit{unweighted} if $\omega_{ij} = 1$ for all $(i,j)\in E$. For simplicity, the reader may think of $G$ as being undirected and unweighted, if desired.  
	
	\begin{definition}
		The \emph{out-degree} of a vertex $i$ is the sum of the weights of the \emph{outgoing} edges from $i$, i.e., the edges of the form $(i,j)$ for some $j$. The \emph{in-degree} of a vertex $i$ is  the sum of the weights of the \emph{incoming} edges to $i$, i.e., the edges of the form $(j,i)$ for some $j$.
		The out-degree vector is $\dout = A \one$, and the in-degree vector is $\din = A^\top \one$. 
		The minimum and maximum out-degrees are denoted respectively by 
		\[\dmin = \min\limits_i\;\dout(i), \hspace{6mm} \dmax = \max\limits_i\;\dout(i).\]
	\end{definition}
	For an unweighted graph, the out-degree is just the number of outgoing edges, and similarly the in-degree is the number of incoming edges. 
	
	In an undirected graph we have $A=A^\top$, which implies that $\dout = \din$.

	\subsection{Fitness Centrality iteration}	
	Given a vector $x\in \mathbb{R}^n$ with no zero entries, we use the shorthand $\inv(x)$ to denote the vector with $i$-th component $[\inv(x)]_i  = 1/x_i$. 
	
	First, we recall the definition of the iterative map, which is essential to define the Fitness Centrality. 
	
	\begin{definition}\label{def:FC iteration}Given a graph with adjacency matrix $A$ and a parameter $\delta \in \mathbb{R}_+$,  the \emph{iteration map} of the Fitness Centrality is $T:\mathbb{R}_+^n \to \mathbb{R}_+^n$, defined component-wise as
		\begin{equation}\label{eq:FC iteration}
			[T(x)]_i \coloneqq \delta + \sum\limits_{j=1}^n A_{ij} \frac{1}{x_j}.
		\end{equation}
		In vector form the expression is $T(x) \coloneqq \delta \one + A \cdot  \inv(x)$. 
	\end{definition}
	
	The map $T$ is well defined; that is, if $x\in\mathbb{R}_+^n$ , then also all the components of $T(x)$ are positive. Note that in the original notion of Economic Fitness Complexity \cite{def1EFC2012, def2EFC2013}, the parameter $\delta$ was absent; the iterative map of the Economic Fitness Complexity algorithm is equivalent to setting $\delta=0$ in \eqref{eq:FC iteration}.
	
	\begin{definition}\label{def:FC_step}
		For a starting vector $x^{(0)} \in \mathbb{R}_+^n$, the \emph{Fitness Centrality iterations} are defined recursively as $x^{(k)} = T(x^{(k-1)})$ for $k\geq 1$. We will call a single step of this iteration a \emph{standard step}.
	\end{definition}
	If the iterations $x^{(k)}$ have a limit as $k$ grows, then the limit must be a fixed point of the map $T$. A priori, it is not obvious whether the limit exists, or whether $T$ has one or multiple fixed points. 
	\begin{definition}\label{def:EFCentrality}
		The \emph{Fitness Centrality} of a graph $G$ (and for parameter $\delta \in \mathbb{R}_+$) is a fixed point of the iteration determined by $T$. In other words, it is a vector $y\in\mathbb{R}_+^n$ satisfying 
		\[T(y) = y.\]
	\end{definition}
	
	Until recently, the existence and uniqueness of the solution were not guaranteed. In \cite{CostFunctions2026}, it was proved that for an undirected (possibly weighted) graph, there exists a unique fixed point. The proof is based on the construction of a convex potential, whose stationary point is the unique fixed point of $T$.
	However, it was not known whether any iteration $x^{(k)} = T(x^{(k-1)})$ would converge to the solution $y$. 
In the next section we prove that the iterates converge for any graph, including directed or weighted graphs. We provide explicit convergence bounds, and propose computational approaches that lead to fast convergence to the unique fixed point.

	\section{Theoretical aspects of the convergence} \label{sec:theory}
	\label{sec:theoretical}
	In this section, we discuss convergence from a theoretical perspective. We start by presenting lower and upper bounds for each iterate of $T$. This allows us to define sets $E_k$ that contain the $k$-th iterate $x^{(k)}$. We then study the behavior of $E_k$ as $k\to \infty$. 
	
	Based on the developed framework, we show that starting from any initial condition, the sequence of iterates will converge to the unique fixed point, providing an explicit upper bound on the convergence rate.

	\subsection{Bounds of the iteration}
	With the definitions of the minimum and maximum out-degrees, any row sum of $A$ can be bounded as $\dmin \leq  [A \one]_i \leq \dmax$. This allows to estimate $T(x)$, if bounds on $x$ are known. 
	
	\begin{proposition}\label{prop:1-step iteration bound}
		Consider $x\in \mathbb{R}_+^n$, and suppose that for $a,b> 0$ we have %$x_i \in [a,b]$ for all $i$. 
		\begin{equation}\label{eq:x is in [a,b]}
			a \leq x_i \leq b \quad \forall i.
		\end{equation}
		Then we have 
		\[ \delta + \frac{\dmin}{b} \leq [T(x)]_i \leq \delta + \frac{\dmax}{a} \quad \forall i.\]	
	\end{proposition}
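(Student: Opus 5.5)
The plan is to bound each term of the sum defining $[T(x)]_i$ by exploiting that $t\mapsto 1/t$ is decreasing on $\mathbb{R}_+$, and then use the row-sum bounds $\dmin \le [A\one]_i \le \dmax$ recalled just before the statement.

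First, from \eqref{eq:x is in [a,b]} and $a,b>0$, we have $1/b \le 1/x_j \le 1/a$ for every $j$. Since all entries of $A$ are nonnegative ($A_{ij}=\omega_{ij}>0$ on edges and $0$ otherwise), multiplying by $A_{ij}$ preserves the inequalities, so
\[ \frac{A_{ij}}{b} \le \frac{A_{ij}}{x_j} \le \frac{A_{ij}}{a} \quad \forall i,j. \]
Summing over $j$ gives
\[ \frac{1}{b}\,[A\one]_i \le \sum_{j=1}^n \frac{A_{ij}}{x_j} \le \frac{1}{a}\,[A\one]_i. \]

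Next, $[A\one]_i = \dout(i)$, so $\dmin \le [A\one]_i \le \dmax$. Since $1/a,1/b>0$, this turns the previous display into $\dmin/b \le \sum_j A_{ij}/x_j \le \dmax/a$. Adding $\delta$ to all sides and recalling Definition~\ref{def:FC iteration} yields the claim.

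There is no real obstacle here. The only points to check are the nonnegativity of $A$, which is needed to preserve the inequalities after multiplication, and the positivity of $a$ and $b$, which is needed for the inversions and scalings. Both are guaranteed by the setting.
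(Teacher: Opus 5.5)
Your proof is correct and follows essentially the same argument as the paper: invert the bounds $a \le x_j \le b$, multiply by the nonnegative weights $A_{ij}$, sum over $j$ to get $\dout(i)/b$ and $\dout(i)/a$, then use $\dmin \le \dout(i) \le \dmax$ and add $\delta$. Your explicit remark on the nonnegativity of $A$ states a point the paper uses without comment.
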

	\begin{proof}
		From the hypothesis, equation \eqref{eq:x is in [a,b]}, we get 
		\[\frac{1}{b} \leq \frac{1}{x_i} \leq  \frac{1}{a}.\]
		Substituting this into \eqref{eq:FC iteration} yields the upper bound
		\[ [T(x)]_i = \delta + \sum\limits_{j=1}^n A_{ij} \frac{1}{x_j} \leq \delta + \sum\limits_{j=1}^n A_{ij} \frac{1}{a} = \delta + \frac{1}{a} \, \dout(i) \leq \delta + \frac{\dmax}{a}. \]
		Similarly, we obtain the lower bound
		\[ [T(x)]_i = \delta + \sum\limits_{j=1}^n A_{ij} \frac{1}{x_j} \geq \delta + \sum\limits_{j=1}^n A_{ij} \frac{1}{b} = \delta + \frac{1}{b} \, \dout(i) \geq \delta + \frac{\dmin}{b}. \]
	\end{proof}	
	To characterize the evolution of the upper and lower bounds, we define the following functions.
	\begin{definition}
		For any arbitrary positive real numbers $a$ and $b$, let $l_1, r_1$ be defined as 
		\[l_1(b) = \delta + \frac{\dmin}{b}, \quad\quad r_1(a) =\delta + \frac{\dmax}{a}.\]
		Let $l_2 = l_1 \circ r_1$ and $r_2 = r_1 \circ l_1$, that is 
		\[l_2(a) = \delta + \frac{\dmin}{\delta + \frac{\dmax}{a}}, \quad\quad r_2(b) = \delta + \frac{\dmax}{\delta + \frac{\dmin}{b}}. \]
	\end{definition}
	Proposition~\ref{prop:1-step iteration bound}  can then be reformulated as follows for a single- or two-step iteration.
	\begin{corollary}\label{cor:1,2-step iterations}
		If  $a \leq x_i^{(k)} \leq b$ for all components $i$, then 
		\[l_1(b) \leq x_i^{(k+1)} \leq r_1(a)\quad  \forall i,\]
		\[l_2(a) \leq x_i^{(k+2)} \leq r_2(b)\quad \forall i.\]
	\end{corollary}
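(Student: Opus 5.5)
The plan is to apply Proposition~\ref{prop:1-step iteration bound} twice, first with the given bounds on $x^{(k)}$ and then with the bounds this produces for $x^{(k+1)}$. Both applications are legitimate because $x^{(k)} \in \mathbb{R}_+^n$ for every $k$: the starting vector lies in the positive orthant and $T$ maps $\mathbb{R}_+^n$ into itself.

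For the first inequality, I take $x = x^{(k)}$. The hypothesis $a \le x_i^{(k)} \le b$ is exactly \eqref{eq:x is in [a,b]}, and $x^{(k+1)} = T(x^{(k)})$. Proposition~\ref{prop:1-step iteration bound} therefore gives
\[ l_1(b) = \delta + \frac{\dmin}{b} \le x_i^{(k+1)} \le \delta + \frac{\dmax}{a} = r_1(a) \quad \forall i. \]

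For the second inequality, I apply the proposition again with $x = x^{(k+1)}$. Here the role of $a$ is played by $a' = l_1(b)$ and the role of $b$ by $b' = r_1(a)$. These numbers are admissible because they are positive: $\delta > 0$ and $\dmin, \dmax \ge 0$, so both are at least $\delta$. The proposition then yields
\[ \delta + \frac{\dmin}{r_1(a)} \le x_i^{(k+2)} \le \delta + \frac{\dmax}{l_1(b)}. \]
The left-hand side is $l_1(r_1(a)) = l_2(a)$ and the right-hand side is $r_1(l_1(b)) = r_2(b)$, which is the claim.

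The only point needing any care is the crossing of the variables in the second step. The new upper bound $b'$ depends on $a$ and feeds into $l_1$, while the new lower bound $a'$ depends on $b$ and feeds into $r_1$. This crossing is exactly why $l_2$ is a function of $a$ and $r_2$ is a function of $b$, and it matches the compositions $l_2 = l_1 \circ r_1$ and $r_2 = r_1 \circ l_1$ in the definition. Apart from checking the positivity of $a'$ and $b'$ noted above, there is no real obstacle.
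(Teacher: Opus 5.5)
Your proof is correct and is exactly the argument the paper intends: it states the corollary as a direct reformulation of Proposition~\ref{prop:1-step iteration bound}, applied once to $x^{(k)}$ and then to $x^{(k+1)}$ with the crossed bounds $l_1(b)$ and $r_1(a)$, which yields $l_1\circ r_1 = l_2$ and $r_1\circ l_1 = r_2$. Your check that the intermediate bounds are positive (both are at least $\delta$) is a useful detail that the paper leaves implicit.
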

	
	\begin{definition}\label{def:sets E_k}
		Starting from the set $E_{0} = [a_0, b_0]^n \subseteq \mathbb{R}_+^n$, the lower and upper bounds are defined recursively for $k \in \mathbb{N}$ by
		\[a_k = l_1\left(b_{k-1}\right), \quad b_{k} = r_1\left(a_{k-1}\right).\]
		The domain $E_{k}$ is defined as
		\[E_{k} = [a_{k}, b_{k}]^n.\]
	\end{definition}
	Combining this definition with Proposition~\ref{prop:1-step iteration bound}, it follows that $T(E_k) \subseteq E_{k+1}$. By construction, if $x^{(0)}\in E_0$, then the $k$-th iterate $x^{(k)}$ lies in $E_k$. 

	We now analyze the evolution of the domains $E_k$ starting from any admissible initial vector $x^{(0)} \in \mathbb{R}^n_+$. Here, we need to use $\widetilde{E}_0 = \mathbb{R}^n_+ = (0,\infty)^n$, which is not covered by Definition~\ref{def:sets E_k} since $\mathbb{R}^n_+$ is not bounded. Note that the results of Proposition~\ref{prop:1-step iteration bound} and Corollary~\ref{cor:1,2-step iterations} also hold for $a=0$ and $b=\infty$, under the convention that $\frac{1}{0} = \infty$ and $\frac{1}{\infty}=0$. However, for the sake of rigor and to illustrate the process more clearly, we describe explicitly the first few iterations starting from $\widetilde{E}_0$. 
	
	After one step of the iteration, we obtain from \eqref{eq:FC iteration} that $x^{(1)}_i \geq \delta$  for all $i$ and no upper bound; that is, $\widetilde{E}_1 = (\delta,\infty)^n$. 
	After the second step, the lower bound for $x^{(1)}$ yields an upper bound for $x^{(2)}$, namely $x^{(2)}_i \leq \delta + \frac{\dmax}{\delta}$ for all $i$. Since the lower bound $x^{(2)}_i \geq \delta$ still holds, we have  $\widetilde{E}_2 = (\delta, \delta + \frac{\dmax}{\delta})^n$.  
	
	From this point onward, all subsequent sets are bounded, so we can use closed intervals instead of open ones and apply Proposition~\ref{prop:1-step iteration bound}. Indeed, we can assume without loss of generality that the starting vector lies in $E_0 = [\delta, \delta + \frac{\dmax}{\delta}]^n$, since after two steps of the iteration we would have $T^2(x^{(0)})\in E_0$ for any starting point $x^{(0)}\in \mathbb{R}^n_+$. 
	
	Having obtained a compact set $E_0$, we can prove the existence of a fixed point of $T$. 
	
	\begin{proposition}\label{prop:existence fixed point}
		The map $T$ admits a fixed point. 
	\end{proposition}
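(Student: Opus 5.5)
The natural route is Brouwer's fixed point theorem applied to the compact convex set $E_0 = [\delta, \delta + \frac{\dmax}{\delta}]^n$ identified above. Three properties are needed: $E_0$ is compact, convex and nonempty; $T$ is continuous on $E_0$; and $T(E_0)\subseteq E_0$. The first is immediate, since $E_0$ is a product of closed bounded intervals with $0<\delta\le \delta+\dmax/\delta$. The second holds because every point of $E_0$ has components at least $\delta>0$. On the positive orthant, each component $[T(x)]_i=\delta+\sum_j A_{ij}/x_j$ is a finite sum of continuous functions, so $T$ is continuous (indeed smooth) there.

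For invariance, apply Proposition~\ref{prop:1-step iteration bound} with $a=\delta$ and $b=\delta+\dmax/\delta$. For $x\in E_0$ this gives
\[
\delta \le \delta+\frac{\dmin}{\delta+\dmax/\delta}\le [T(x)]_i \le \delta+\frac{\dmax}{\delta}\quad\forall i.
\]
The left inequality uses $\dmin\ge 0$, and the right one is exactly $r_1(a)=r_1(\delta)$. Hence $T(E_0)\subseteq E_0$. Equivalently, $a_1=l_1(b_0)\ge\delta=a_0$ and $b_1=r_1(a_0)=b_0$, so $E_1\subseteq E_0$ in the notation of Definition~\ref{def:sets E_k}.

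Brouwer's theorem then gives $y\in E_0$ with $T(y)=y$. Since $E_0\subset\mathbb{R}^n_+$, this $y$ is a fixed point of $T$ in the sense of Definition~\ref{def:EFCentrality}.

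The argument is short, and the only real obstacle is making sure the invariant set stays away from the boundary of the orthant, where $T$ is not defined or continuous. The lower bound $\delta>0$, which comes from the regularization, is what guarantees this. Graphs with zero rows, where $\dmin=0$, cause no problem: the lower bound $\delta$ still holds and the upper bound only uses $\dmax$. Uniqueness is not claimed here and would be addressed separately, for example through contraction of the nested sets $E_k$.
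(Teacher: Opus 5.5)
Your proof is correct and matches the paper's argument. The paper likewise applies Proposition~\ref{prop:1-step iteration bound} to get $T(E_0)\subseteq E_1\subseteq E_0$ for $E_0=[\delta,\delta+\dmax/\delta]^n$, and then invokes Brouwer's theorem on this compact convex set.

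One caution, about your closing aside on uniqueness only: uniqueness cannot come from the nested sets $E_k$ shrinking, since they converge to $E_*=[a_*,b_*]^n$, which is not a single point when $\dmin<\dmax$. The paper instead obtains uniqueness from the relative-error contraction of $T^2$ (Theorem~\ref{thm:error bound iteration 2-step}).
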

	\begin{proof}
		Consider any $x \in E_0 = [\delta, \delta + \frac{\dmax}{\delta}]^n$. Proposition~\ref{prop:1-step iteration bound} yields 
		\[T(x) \in E_1 = \left[\delta+ \frac{\dmin}{\delta + \frac{\dmax}{\delta}}, \delta + \frac{\dmax}{\delta}\right]^n.\]
		Note that $E_1 \subseteq E_0$, and so $T(E_0) \subseteq E_1 \subseteq E_0$. The set $E_0$ is compact, convex, and therefore homeomorphic to a closed ball in $\mathbb{R}^n$. 
		Thus, since $T$ is a continuous function from the set $E_0$ into itself, by Brouwer's fixed point theorem \cite[Theorem 6.3.2]{OrtRhe2000} there exists a fixed point $y$ for $T$; that is, $T(y) = y$.
	\end{proof}
	
	For now, we have only the existence of a fixed point, uniqueness will be shown in Corollary~\ref{cor:uniqueness fixed point}.
	
	\subsection{Limit of $E_k$ and estimate of $E_*$}
	In the above example, after the first few iterations the size of the domains is reduced. The further iterations of $E_k$ also get progressively smaller, as the following proposition shows. 
	
	\begin{proposition}\label{prop:increasing sets E_k}
		Let $E_0 = [a_{0}, b_{0}]^n$ and construct $E_k=[a_{k}, b_{k}]^n$ for $k\geq 1$ as in Definition~\ref{def:sets E_k}. Suppose that $a_{0} \leq a_{1}$ and $b_{1} \leq b_{0}$.
		Then we have $E_{k+1} \subseteq E_{k}$ for every $k\geq 0$. If the inequalities are strict, i.e., $a_{0} < a_{1}$ and $b_{1} < b_{0}$, then the containment is also strict: $E_{k+1} \subsetneq E_{k}$.
	\end{proposition}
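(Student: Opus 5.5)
The argument is an induction on $k$, driven by the monotonicity of $l_1$ and $r_1$: both are strictly decreasing on $\mathbb{R}_+$, since $\dmin,\dmax>0$ (if $\dmin=0$, then $l_1$ is constant, and we handle that separately below). Since $E_k=[a_k,b_k]^n$, the containment $E_{k+1}\subseteq E_k$ is equivalent to the two scalar inequalities $a_k\le a_{k+1}$ and $b_{k+1}\le b_k$. The hypothesis gives these for $k=0$, so we aim to prove the stronger statement
\[
P(k):\quad a_k\le a_{k+1}\ \text{ and }\ b_{k+1}\le b_k
\]
for all $k\ge 0$.

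For the inductive step, assume $P(k)$. From $b_{k+1}\le b_k$ and the fact that $l_1$ is decreasing, we get $a_{k+2}=l_1(b_{k+1})\ge l_1(b_k)=a_{k+1}$. From $a_k\le a_{k+1}$ and the fact that $r_1$ is decreasing, we get $b_{k+2}=r_1(a_{k+1})\le r_1(a_k)=b_{k+1}$. This is exactly $P(k+1)$. The key point is the cross-coupling: the upper bounds control the next lower bounds and vice versa, so the two inequalities have to be carried jointly through the induction rather than separately.

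For the strict version, assume $a_0<a_1$ and $b_1<b_0$, and run the same induction with strict inequalities. Because $x\mapsto \dmax/x$ is strictly decreasing when $\dmax>0$, the map $r_1$ is strictly decreasing, so strictness of the upper bounds propagates. The only delicate point is the lower bounds when $\dmin=0$. In that case $l_1\equiv\delta$, so $a_k=\delta$ for all $k\ge1$, and the inequality $a_{k}<a_{k+1}$ can fail for $k\ge1$. However, strict containment of the cubes only requires one strict inequality. It is enough that $b_{k+1}<b_k$ for all $k$, and this follows by alternation: $b_{k+2}<b_{k+1}$ needs $a_k<a_{k+1}$, which in turn needs $b_{k-1}>b_k$ when $\dmin>0$. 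So in the degenerate case I would track the strictness of the $b_k$ chain through the relation $b_{k+2}=r_2(b_k)$ and note that $r_2$ is nondecreasing.

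I expect the main obstacle to be this bookkeeping of strictness in the degenerate case. If needed, the cleanest way to handle it is to argue separately along even and odd indices using the monotone maps $l_2$ and $r_2$. When $\dmin>0$, both maps are strictly increasing, so $a_0<a_2$ and $b_2<b_0$ (obtained from the one-step case) propagate strictly along each parity class.
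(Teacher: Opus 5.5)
Your joint induction on $a_k\le a_{k+1}$ and $b_{k+1}\le b_k$, using that $l_1$ and $r_1$ are decreasing, is exactly the paper's argument for the non-strict part. For the strict part with $\dmin>0$, which is the case the paper tacitly has in mind, rerunning the same induction with strict inequalities is also correct: then $\dmax\ge\dmin>0$, so both $l_1$ and $r_1$ are strictly decreasing.

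The error is in your treatment of the degenerate case $\dmin=0$. There the strict claim is false, so no bookkeeping can rescue it.
\begin{itemize}
\item With $l_1\equiv\delta$ you get $a_k=\delta$ for every $k\ge 1$. Hence $b_{k+1}=r_1(a_k)=\delta+\dmax/\delta$ for every $k\ge1$, so $b_2=b_3=\cdots$ and $E_3=E_2$.
\item Concretely, take $a_0=\delta/2$ and $b_0=\delta+3\dmax/\delta$ for a graph with a vertex of out-degree zero and $\dmax>0$. Then $a_1=\delta>a_0$ and $b_1=\delta+2\dmax/\delta<b_0$, so both hypotheses hold strictly. Yet $E_3=E_2=[\delta,\delta+\dmax/\delta]^n$.
\end{itemize}

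Your ``alternation'' step breaks exactly here. Deriving $b_{k+2}<b_{k+1}$ requires $a_k<a_{k+1}$, which you had just observed fails for all $k\ge1$. Appealing to $r_2$ being nondecreasing also cannot propagate a strict inequality. In fact $r_2=r_1\circ l_1$ is constant, equal to $\delta+\dmax/\delta$, when $\dmin=0$.

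The right conclusion is that the strict containment needs the extra hypothesis $\dmin>0$, which the paper leaves implicit. Under that hypothesis your main argument already suffices, and the parity-class discussion is unnecessary.
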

	\begin{proof}
		We prove it by induction on $k$. Compare the left and right boundaries of $E_{k+1}$ to those of $E_{k}$ for $k\geq 1$. 
		\[\begin{aligned}
			a_{k} = l_1(b_{k-1}) &= \delta + \frac{\dmin}{b_{k-1}}, \\
			a_{k+1} = l_1(b_{k}) &= \delta + \frac{\dmin}{b_{k}}.
		\end{aligned}\]
		By inductive hypothesis we have $b_{k}\leq b_{k-1}$, therefore  $a_{k+1} \geq a_{k}$. 
		The inequality $b_{k+1} \leq b_{k}$ can be proved in an analogous manner starting from  $a_{k} \geq a_{k-1}$. 
	\end{proof}
	
	Instead of using 1-step functions $l_1$ and $r_1$, it is more convenient to use the 2-step functions $l_2$ and $r_2$, as $a_{k+2} = l_2(a_{k})$ and $b_{k+2} = r_2(b_{k})$. Then, the asymptotic limit of $a_{k}$ and $b_{k}$ for $k\to \infty$ can be obtained by studying the fixed points of the maps $l_2$ and $r_2$ respectively, which we show to be unique and attractive. 
	\begin{proposition}\label{prop:limits of a_k and b_k}
		Starting from any $a_{0},b_{0} \in \mathbb{R}_+$, the iterations $a_{k}$ and $b_{k}$ converge, respectively, to
		\begin{equation}\label{eq:limits of a_k and b_k}
			\begin{aligned}
				&a_* \coloneqq \lim\limits_{k\to\infty} a_{k} =   \frac{1}{2\delta}\left( (\dmin-\dmax+\delta^2) + \sqrt{(\dmin-\dmax+\delta^2)^2+4\dmax\delta^2} \right), \\
				&b_* \coloneqq \lim\limits_{k\to\infty} b_{k} =  \frac{1}{2\delta}\left( (\dmax-\dmin+\delta^2) + \sqrt{(\dmax-\dmin+\delta^2)^2+4\dmin\delta^2} \right).
			\end{aligned}
		\end{equation}
	\end{proposition}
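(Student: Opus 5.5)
Since $a_{k+2}=l_2(a_k)$ and $b_{k+2}=r_2(b_k)$, I will study the one-dimensional maps $l_2$ and $r_2$ on $\mathbb{R}_+$. Both are Möbius transformations with positive coefficients:
\[
l_2(a) = \delta + \frac{\dmin a}{\delta a + \dmax}, \qquad r_2(b) = \delta + \frac{\dmax b}{\delta b + \dmin}.
\]
Each is increasing, concave and bounded on $(0,\infty)$. The plan is to show that each map has a unique positive fixed point, that this point attracts every positive starting value, and that it equals the stated expression.

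\textbf{Fixed points.} The equation $l_2(a)=a$ becomes, after multiplying by $\delta a+\dmax$,
\[
\delta a^2 - (\delta^2+\dmin-\dmax)\,a - \delta\dmax = 0 .
\]
This quadratic has negative product of roots, so it has exactly one positive root. That root is the formula for $a_*$ in \eqref{eq:limits of a_k and b_k}. The equation $r_2(b)=b$ gives the analogous quadratic with $\dmin$ and $\dmax$ swapped, and its positive root is $b_*$.

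\textbf{Convergence.} Write $f$ for either $l_2$ or $r_2$ and $p$ for its positive fixed point. Because $f$ is increasing, $f(t)-t$ has the sign of $p-t$ on $(0,\infty)$: the quadratic is negative between its roots and positive beyond $p$.
\begin{itemize}
\item If $t_0<p$, then $t_0<f(t_0)\le f(p)=p$. So the sequence $t_{m+1}=f(t_m)$ is increasing and bounded by $p$.
\item If $t_0>p$, the symmetric argument shows the sequence is decreasing and bounded below by $p$.
\end{itemize}
In either case the sequence is monotone and bounded, hence convergent. By continuity of $f$ its limit is a positive fixed point, and by uniqueness that limit is $p$.

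If an explicit rate is wanted, one can instead use $f'(t) = \dmin\dmax/(\delta t+\dmax)^2 < \dmin/\dmax \le 1$ for $l_2$. The analogous bound holds for $r_2$ with the ratio reversed, which is only useful when $\delta>0$. A cleaner route is to show $|f(t)-p|\le \rho\,|t-p|$ with
\[
\rho = \frac{\dmin\dmax}{(\delta p+\dmax)(\delta t+\dmax)} < 1 .
\]
The monotone argument already suffices for the proposition as stated.

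\textbf{Passing to the full sequences.} Apply the above to the even subsequences $a_{2m}$ and $b_{2m}$, started from $a_0,b_0$. Then apply it to the odd subsequences $a_{2m+1}$ and $b_{2m+1}$, started from $a_1=l_1(b_0)>0$ and $b_1=r_1(a_0)>0$. Both subsequences of $(a_k)$ converge to $a_*$ and both subsequences of $(b_k)$ converge to $b_*$, so $a_k\to a_*$ and $b_k\to b_*$. As a consistency check, the relations $a_*=l_1(b_*)$ and $b_*=r_1(a_*)$ follow by passing to the limit in Definition~\ref{def:sets E_k}.

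\textbf{Main obstacle.} There is no conceptual difficulty. The main care is in the algebra: deriving the quadratics, confirming that the displayed formulas are the positive roots, and checking the sign of $f(t)-t$ in the monotonicity argument.
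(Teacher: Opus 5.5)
Your proof is correct. Its convergence step takes a different route from the paper's.

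\textbf{How the two arguments differ.} The paper argues through derivatives. For $l_2$ it observes $0<l_2'(z)<1$ on $(0,\infty)$ and concludes that $a_*$ is attracting. For $r_2$, whose derivative can be $\ge 1$ near $0$, it introduces the threshold $w=(\sqrt{\dmin\dmax}-\dmin)/\delta$ and checks $b_*>w$. It then uses $r_2(z)\ge z+\delta$ on $(0,w]$ to show that the iterates reach $(w,\infty)$, where $r_2'<1$, in finitely many steps.

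You replace all of this with an order argument. Since $f$ is increasing and the sign of $f(t)-t$ is read off the quadratic, every orbit is monotone and bounded by the unique positive fixed point, so it converges to that point. This handles $l_2$ and $r_2$ in exactly the same way and needs neither the threshold $w$ nor a uniform derivative bound. You also make explicit the passage from the two-step maps to the full sequences via even and odd subsequences, which the paper leaves implicit. The derivative viewpoint is closer to giving rate information, which the monotone argument does not supply by itself.

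\textbf{Your rate aside.} The remark about $r_2$ is muddled: the reversed ratio $\dmax/\dmin\ge 1$ is useless for every $\delta$, not only some. The secant-slope idea does work for $r_2$, though. One has
$r_2(t)-b_*=\frac{\dmin}{\delta t+\dmin}\cdot\frac{\dmax}{\delta b_*+\dmin}\,(t-b_*)$,
and the fixed-point equation gives $\frac{\dmax}{\delta b_*+\dmin}=1-\frac{\delta}{b_*}$. Hence $|r_2(t)-b_*|\le(1-\delta/b_*)\,|t-b_*|$ for all $t>0$. Likewise $|l_2(t)-a_*|\le(1-\delta/a_*)\,|t-a_*|$, so both maps are global contractions toward their fixed points.

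\textbf{The formulas.} Your quadratics yield exactly the formulas in the statement. The paper's proof body has the terms $4\dmin\delta^2$ and $4\dmax\delta^2$ under the square roots interchanged. That is a typo in the paper, not a discrepancy in your work.
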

	\begin{proof}
		For the left bound, we have the iterative map
		\[l_2(z) = \delta +  \frac{\dmin z}{\delta z + \dmax}.\]
		The equation $l_2(z)=z$, after rearranging, becomes the quadratic equation
		\[z^2 \delta + z (\dmax-\dmin-\delta^2) - \dmax\delta = 0.\]
		There is  only one positive solution, which we call $a_*$
		\[a_* = \frac{1}{2\delta}\left( (\dmin-\dmax+\delta^2) + \sqrt{(\dmin-\dmax+\delta^2)^2+4\dmin\delta^2} \right).\]
		The derivative of $l_2(z)$ is $l_2'(z) = \frac{\dmin\dmax}{(\delta z + \dmax)^2}$. Note that $0<l_2'(z) < 1$ for any $z >0$, so $a_*$ is an attractive point for the iterative sequence $\{a_{k}\}_{k\in\mathbb{N}}$. 
		
		For the right bound, the iterative map is
		\[r_2(z) = \delta +  \frac{\dmax z}{\delta z + \dmin}.\]
		The equation $r_2(z)=z$, after rearranging, becomes the quadratic equation
		\[z^2 \delta + z (\dmin-\dmax-\delta^2) - \dmin\delta = 0.\]
		It has only one positive solution, which we call $b_*$: 
		\[b_* = \frac{1}{2\delta}\left( (\dmax-\dmin+\delta^2) + \sqrt{(\dmax-\dmin+\delta^2)^2+4\dmax\delta^2} \right).\]
		
		The derivative of $r_2(z)$ is $r_2'(z) = \frac{\dmin\dmax}{(\delta z + \dmin)^2}$, which no longer satisfies $r_2'(z)<1$ for all $z \in \mathbb{R}_+$. Nevertheless, $r_2(z)$ is strictly increasing and concave on $z \in \mathbb{R}_+$, since $r_2'(z)>0$ and $r_2'(z)$ is strictly decreasing. 
		
		The threshold for the derivative we are interested in  is $w = \frac{\sqrt{\dmin\dmax}-\dmin}{\delta}$. Specifically, $r_2'(z)\geq 1$ for $z \in (0,w]$ and $r_2'(z) < 1$ for $z \in (w,+\infty)$.  It is easy to see that $b_*> w$, implying that $b_*$ is an attractive point; furthermore, if  $b_{k} \in (w,\infty)$ for some $k$, then the sequence converges to $b_*$. 
		
		It remains to examine the case where $b_{0} \in (0,w]$. Since $r_2'(z) \geq 1$ for $z \in [0,w]$, we have $r_2(z) \geq z + r_2(0) = z + \delta$. Therefore, if $b_{k}$ remains in $(0,w]$, then $b_{k+1} \geq b_{k} + \delta$. Thus, after a finite number of iterations the sequence must enter the interval $(w,\infty)$, where we know it will converge to $b_*$. 
	\end{proof}
	
	\begin{remark}\label{rem: the border maps swap a* and b*}
		Note that $l_2(l_1(b_*)) = l_1(r_1(l_1(b_*)))) = l_1(r_2(b_*)) = l_1(b_*)$, so $l_1(b_*)$ is a fixed point for $l_2$. Since $a_*$ is the unique fixed point, it must hold that $l_1(b_*)=a_*$; similarly we have $r_1(a_*)=b_*$. That is, the maps $l_1$ and $r_1$ swap the asymptotic left and right boundaries $a_*$ and $b_*$.
	\end{remark}
	The expressions of \eqref{eq:limits of a_k and b_k} are quite cumbersome. Let us determine the dominant term, under the assumption that $\delta$ is small and that $\dmin<\dmax$. The first order approximation of $a_*$ and $b_*$ is
	\begin{equation}\label{eq:a* and b* first order}
		a_* = \frac{\dmax\, \delta}{\dmax-\dmin} + O(\delta^3), \hspace{6mm}
		b_* = \frac{\dmax-\dmin}{\delta} + O(\delta), \hspace{6mm} \text{for} \; \delta \to 0.
	\end{equation}
	In applications, parameter $\delta$ is chosen to be small, but not too small in order to avoid slow convergence.  Under the assumption\footnote{The assumption is easily satisfied; for example, it is sufficient to have an edge with weight at least 1.} that $\dmin+\dmax\geq 1$, the approximation \eqref{eq:a* and b* first order} holds whenever $\frac{\delta(\dmax+\dmin)^{\frac{2}{3}}}{\dmax-\dmin} \ll 1$.
	
The case $\dmin=\dmax$ (i.e., a regular graph) produces a trivial result. Indeed, the Fitness Centrality scores of all vertices are the same, which gives us no insight on the graph.
	
	Proposition~\ref{prop:limits of a_k and b_k} can be reformulated focusing on what is the limit of the sets $E_k$ as $k\to \infty$, as shown in the following corollary.
	\begin{corollary}\label{cor:limit of x^k is in E*}
		For any initial condition $x^{(0)} \in \mathbb{R}^n_+$, if the generated sequence $\{x^{(k)}\}_{k\in \mathbb{N}}$ has a limit $x^* = \lim\limits_{k\to\infty} x^{(k)}$, then $x^* \in E_{*} = [a_*,b_*]^n$. 
	\end{corollary}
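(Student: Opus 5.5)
The plan is to reduce to the nested box sets $E_k$ and then pass to the limit componentwise. First I replace the starting point by a shifted one. As observed before Proposition~\ref{prop:existence fixed point}, for any $x^{(0)}\in\mathbb{R}^n_+$ we have $x^{(2)}=T^2(x^{(0)})\in E_0=[\delta,\delta+\frac{\dmax}{\delta}]^n$. Define $\tilde x^{(k)}=x^{(k+2)}$; this sequence has the same limit $x^*$ and starts in $E_0$ with $a_0=\delta$, $b_0=\delta+\dmax/\delta$.

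Next I build the sets $E_k=[a_k,b_k]^n$ from $(a_0,b_0)$ as in Definition~\ref{def:sets E_k}. By Corollary~\ref{cor:1,2-step iterations}, or equivalently $T(E_k)\subseteq E_{k+1}$, and induction on $k$, we get $\tilde x^{(k)}\in E_k$ for all $k$. That is,
\[
a_k\le \tilde x^{(k)}_i\le b_k \qquad \text{for all } i \text{ and } k.
\]

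Now I pass to the limit. Proposition~\ref{prop:limits of a_k and b_k} gives $a_k\to a_*$ and $b_k\to b_*$ for any positive starting values, in particular for $a_0=\delta$ and $b_0=\delta+\dmax/\delta$. Since $\tilde x^{(k)}_i\to x^*_i$ and non-strict inequalities are preserved under limits, $a_*\le x^*_i\le b_*$ for every $i$. Hence $x^*\in[a_*,b_*]^n=E_*$.

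The argument does not need Proposition~\ref{prop:increasing sets E_k}, although that result also applies here, since $a_0\le a_1$ and $b_1\le b_0$. The only delicate point is that $\mathbb{R}^n_+$ is unbounded, so Definition~\ref{def:sets E_k} cannot be applied directly to the original starting point. The two-step shift into the compact set $E_0$ removes this problem. Alternatively, one could start from an arbitrary bounding box $[\min_i x^{(0)}_i,\max_i x^{(0)}_i]^n$, which also has positive endpoints, and invoke Proposition~\ref{prop:limits of a_k and b_k} directly.
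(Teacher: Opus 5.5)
Your argument is correct and is essentially the paper's own reasoning: the paper gives no separate proof and presents the corollary as a reformulation of Proposition~\ref{prop:limits of a_k and b_k}. After the two-step reduction one has $x^{(k)}\in E_k$ with $a_k\to a_*$ and $b_k\to b_*$, so passing to the limit gives $x^*\in[a_*,b_*]^n$.
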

	
	Unfortunately, we cannot obtain a tighter region than $E_*$. Even if we start in an extremely small rectangle with $b_{0}-a_{0} \ll 1$, the sequence of domains will still converge to $E_*$. In fact, in this case it is possible to prove an opposite result to Proposition~\ref{prop:increasing sets E_k}: if $E_0 \subseteq E_1$, then $E_{k} \subseteq E_{k+1}$ for $k\geq 0$. 
	
	This optimality is confirmed by the fact that there exist graphs that have a fixed point $y$ on the boundary of $E_*$.  Fix the parameter $\delta$ and the minimum and maximum degrees $\dmin,\dmax$; assume for simplicity that the degrees are integer. This determines the map $T$ and the limiting domain bounds $a_*, b_*$.
	Consider an undirected bipartite graph where $V = V_1 \cup V_2$, $|V_1| =  \dmax$ and $|V_2| = \dmin$. All possible edges between $V_1$ and $V_2$ are present, each with unit weight. Consequently, each vertex in $V_1$ has degree $\dmin$ and each vertex in $V_2$ has degree $\dmax$. 
	
	Consider $y \in \mathbb{R}^n$ where $y_i = a_*$ for vertices $i\in V_1$ and $y_j = b_*$ for vertices $j\in V_2$.  Then, for a vertex $i\in V_1$, we have
	\[ [T(y)]_i = \delta + \sum\limits_{j=1}^n A_{ij} \frac{1}{y_j} = \delta + \sum\limits_{j \in V_2} \frac{1}{y_j} = \delta + \frac{\dmin}{b_*} = a_* = y_i,\]
	where the second to last equality is due to Remark~\ref{rem: the border maps swap a* and b*}. Similarly we get $[T(y)]_j = b_* = y_j$ for $j \in V_2$. Therefore $y$ is a fixed point for $T$ with $y \in \partial  E_*$. 
	
	The same construction can be easily generalized to a graph with $|V_1| = k \dmax$ and $|V_2| = k\dmin$ for any $k \in \mathbb{Z}_+$, by using $k$ copies of the above example; if desired, some edge swaps can be performed to obtain a connected graph. 
	
	Fixing the values of $\delta$, $\dmin$, $\dmax$, and $n$ uniquely determines $T$ and $E_*$. Multiple graphs share these minimum and maximum degrees but have, in general, different fixed points in $E_*$, with some lying on the boundary $\partial E_*$. Since $E_*$ is often a large domain in practice, it would be highly valuable to establish tighter bounds on the limits of the iterates (possibly under specific graph conditions) and, consequently, on the fixed points of $T$. This would provide deeper insight into the nature of Fitness Centrality for these graphs.

	\subsection{Proof of the convergence}
	We are now ready to prove our main result. We show that $T^2$ is a contraction, which implies the unicity of the fixed points, as well as bounds on the convergence rate. 
	
	\begin{theorem}\label{thm:error bound iteration 2-step}
		Let $y$ be a fixed point of the map $T$. Let $E = [a,b]^n$ such that  $T(E) \subseteq E$ and $y\in E$. Consider $x\in E$. 
		Let $\varepsilon$ be the maximum relative distance of the components of $x$ compared to those of $y$, that is
		\[\varepsilon = \max\limits_{1\leq i\leq n} \, \frac{|x_i - y_i|}{y_i}.\]
		Then, the maximum relative distance of the components of $T^2(x)$ with respect to $y$ is
		\begin{equation}\label{eq:error bound iteration 2-step}
			\max\limits_ {1\leq i\leq n}  \frac{ \left| y_i - [T(T(x))]_i \right|}{y_i } \leq \varepsilon   \left( 1 - \frac{\delta}{b} \right)^2.
		\end{equation}
	\end{theorem}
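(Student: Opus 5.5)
Our plan is to prove a one-step estimate in relative error and then apply it twice. The one-step estimate uses a different weighting: relative to $1/y$ on the input side and relative to $y$ on the output side.

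\emph{Step 1 (one step of $T$).} We write $x_j = y_j(1+\eta_j)$ with $|\eta_j|\le \varepsilon$. Then
\[
[T(x)]_i - y_i = \sum_j A_{ij}\Big(\frac{1}{x_j}-\frac{1}{y_j}\Big) = -\sum_j \frac{A_{ij}}{y_j}\,\frac{\eta_j}{1+\eta_j}.
\]
Since $y$ is a fixed point, $\sum_j A_{ij}/y_j = y_i-\delta$. If the factor $1/(1+\eta_j)$ were absent, we would get
\[
|[T(x)]_i-y_i| \le \varepsilon\,(y_i-\delta) = \varepsilon\, y_i\Big(1-\frac{\delta}{y_i}\Big) \le \varepsilon\, y_i\Big(1-\frac{\delta}{b}\Big),
\]
using $y_i\le b$. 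This gives a contraction factor $1-\delta/b$ per step, hence $(1-\delta/b)^2$ over two steps. The difficulty is the factor $1/(1+\eta_j)$, which can be as large as $1/(1-\varepsilon)$. So the naive relative error in $x$ does not contract cleanly by itself in one step.

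\emph{Step 2 (handling the nonlinearity through two steps).} The fix is to measure the intermediate error in the reciprocal variable. We set $u_j = (1/x_j - 1/y_j)/(1/y_j) = y_j/x_j - 1$. Then
\[
[T(x)]_i - y_i = \sum_j \frac{A_{ij}}{y_j}\, u_j,
\]
which is exactly linear. It follows that
\[
\frac{|[T(x)]_i-y_i|}{y_i} \le \max_j|u_j|\,\Big(1-\frac{\delta}{b}\Big).
\]
So one step maps a relative error in the reciprocals to a relative error in the values, with factor $1-\delta/b$. To iterate, we need to pass from the relative error of $x' = T(x)$ back to the relative error of its reciprocals, $u'_i = y_i/x'_i - 1$. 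In general $|u'|$ can exceed $|x'/y-1|$, so this is the delicate point.

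The idea is to exploit the sign structure. $T$ is order-reversing and $1/t$ is convex, so the errors should pair up. If $x_j = y_j(1+\eta_j)$, then $u_j = -\eta_j/(1+\eta_j)$. When $\eta_j > 0$ we have $|u_j| \le \eta_j$. When $\eta_j<0$, $|u_j|$ is larger. After one step, positive $\eta$ in $x$ becomes the negative-side error in $T(x)$, and vice versa. So we would track the lower and upper relative deviations separately. Define $\alpha = \max_j(-\eta_j)$ and $\beta = \max_j \eta_j$, both at most $\varepsilon$. After the first step:
\begin{itemize}
\item The lower deviation of $T(x)$ is at most $\beta(1-\delta/b)$, using $|u_j|\le\eta_j$ for the $\eta_j>0$ terms.
\item The upper deviation of $T(x)$ is at most $\frac{\alpha}{1-\alpha}(1-\delta/b)$.
\end{itemize}
At the second step these roles swap. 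The fraction $\frac{\alpha}{1-\alpha}$ then reappears inside $\frac{t}{1+t}$, and the two transformations cancel:
\[
\frac{\frac{\alpha c}{1-\alpha}}{1+\frac{\alpha c}{1-\alpha}} = \frac{\alpha c}{1-\alpha+\alpha c} \le \alpha c,
\qquad c = 1-\frac{\delta}{b}.
\]

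\emph{Step 3 (assembly and main obstacle).} We would write the two-step composition exactly via monotonicity. The worst case is attained with all $\eta_j$ equal to the extreme values, since each component of $T$ is monotone in each $x_j$. This reduces the estimate to the scalar inequalities above and yields $\max(\alpha,\beta)\,c^2 \le \varepsilon c^2$. The hypotheses $T(E)\subseteq E$ and $y\in E$ are used to guarantee that both $y_i$ and $[T(x)]_i$ are at most $b$, so the factor $1-\delta/b$ is valid at both stages.

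The main obstacle is Step 2: carrying out the Möbius-type bookkeeping carefully, so that the $1/(1-\alpha)$ amplification from the first step is exactly absorbed by the second step rather than only approximately.
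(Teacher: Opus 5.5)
Your Step~2 correctly identifies the right intermediate quantity, $u'_i = y_i/[T(x)]_i - 1$, but the way you propose to control it fails.

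\textbf{The cancellation inequality is false.} Since $c = 1-\delta/b \le 1$, the denominator satisfies $1-\alpha+\alpha c = 1-\alpha(1-c) \le 1$. Hence $\frac{\alpha c}{1-\alpha+\alpha c} \ge \alpha c$, with strict inequality whenever $\alpha>0$. On the lower side your bookkeeping therefore only yields $\frac{\varepsilon c^2}{1-\varepsilon\delta/b}$. The other branch does close if you keep $\beta/(1+\beta)$ instead of the cruder $\beta$ in your first bullet, but this branch cannot.

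\textbf{The slip cannot be repaired inside your scheme.} Every factor $1-\delta/b$ you use comes from $\sum_j A_{ij}/y_j = y_i-\delta$ together with $y_i\le b$. So the hypothesis $[T(x)]_i\le b$ never actually enters your computation, and with only $y\le b$ the claimed bound is false. Take a $d$-regular graph with fixed point $\bar y\one$, set $b=\bar y$, and let $x=(1-\alpha)\bar y\one$. Then $\varepsilon=\alpha$, and a direct computation gives $\frac{\bar y-[T^2(x)]_i}{\bar y} = \frac{\alpha c^2}{1-\alpha\delta/b} > \alpha c^2$. Here $[T(x)]_i>b$, so no admissible $E$ with this $b$ contains $x$. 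The same defect affects your Step~3 reduction: the extremal vectors $(1-\alpha)y$ and $(1+\beta)y$ need not lie in $E$, so $T(E)\subseteq E$ cannot be invoked on them.

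\textbf{The missing idea} is to bound $u'$ directly, with $[T(x)]_i$ in the denominator, instead of passing through the $y$-relative error of $T(x)$ and a M\"obius conversion.
\begin{enumerate}
\item Split $\frac{|x_j-y_j|}{x_jy_j}=\frac{1}{x_j}\cdot\frac{|x_j-y_j|}{y_j}$. This gives $|y_i-[T(x)]_i|\le\varepsilon\sum_j A_{ij}/x_j=\varepsilon([T(x)]_i-\delta)$.
\item Divide by $[T(x)]_i\le b$ to get $|u'_i|\le\varepsilon(1-\delta/b)$. This is exactly where $x\in E$ and $T(E)\subseteq E$ are needed.
\item For the second step use the other split, $\frac{1}{y_j}\cdot\frac{|[T(x)]_j-y_j|}{[T(x)]_j}$. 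Together with $\sum_j A_{ij}/y_j=y_i-\delta$ and $y_i\le b$, this supplies the second factor $1-\delta/b$.
\end{enumerate}
No sign tracking or worst-case reduction is needed.
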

	\begin{proof}
		Since $y$ is a fixed point, we have
		\[y_i = \delta + \sum\limits_{j=1}^n A_{ij} \frac{1}{y_j}.\]
		The first two iterations of $T$ applied to $x$ are
		\[[T(x)]_i = \delta + \sum\limits_{j=1}^n A_{ij} \frac{1}{x_j}, \hspace{8mm}	
		[T(T(x))]_i = \delta + \sum\limits_{j=1}^n A_{ij} \frac{1}{[T(x)]_j}.\]
		Let us now estimate the difference between $y$ and $T(x)$ on the $i$-th component
		\begin{equation}\label{eq:mainthm y-T(x) noabs}
			y_i - [T(x)]_i = \sum\limits_{j=1}^n A_{ij} \left(\frac{1}{y_j} - \frac{1}{x_j} \right) = \sum\limits_{j=1}^n A_{ij} \, \frac{x_j - y_j}{y_j x_j}.
		\end{equation}
		Taking absolute values yields
		\[ \left| y_i - [T(x)]_i \right| \leq \sum\limits_{j=1}^n A_{ij} \frac{1}{x_j} \, \frac{|x_j - y_j|}{y_j} \leq  
		\sum\limits_{j=1}^n A_{ij} \frac{1}{x_j} \varepsilon  = \varepsilon \left( [T(x)]_i - \delta \right). \]
		Divide by $T(x)_i$ to get the relative distance of $y$ with respect to $T(x)$:
		\[ \frac{\left| y_i - [T(x)]_i \right|}{[T(x)]_i}\leq \varepsilon \left( 1 - \frac{\delta}{[T(x)]_i} \right). \]
		Since $x \in E$ and $T(x) \in E$, we have $ [T(x)]_i \leq b$, which gives  %$\frac{1}{[T(x)]_i} \geq \frac{1}{b}$ and
		$1-\frac{\delta}{[T(x)]_i} \leq 1-\frac{\delta}{b}$. Therefore 
		\begin{equation}\label{eq:mainthm y-T(x) abs}
			\frac{\left| y_i - [T(x)]_i \right|}{[T(x)]_i}\leq \varepsilon \left( 1 - \frac{\delta}{b} \right). 
		\end{equation}
		In a similar manner, we estimate the distance between $y$ and $T^2(x)$
		\begin{equation}\label{eq:mainthm y-T2(x) noabs}
			y_i - [T(T(x))]_i = \sum\limits_{j=1}^n A_{ij} \left(\frac{1}{y_j} - \frac{1}{[T(x)]_j} \right) = \sum\limits_{j=1}^n A_{ij} \, \frac{[T(x)]_j - y_j}{y_j [T(x)]_j}.
		\end{equation}
		By taking absolute values and using \eqref{eq:mainthm y-T(x) abs}, we get
		\begin{equation*}
			\begin{aligned}
				\left| y_i - [T(T(x))]_i \right|  &\leq \sum\limits_{j=1}^n A_{ij} \frac{1}{y_j} \, \frac{\left| [T(x)]_j - y_j \right| }{[T(x)]_j}  \\ 
				& \leq \sum\limits_{j=1}^n A_{ij} \frac{1}{y_j} \varepsilon \left( 1 - \frac{\delta}{b} \right)   \\
				& = \left( y_i - \delta \right)	\varepsilon \left( 1 - \frac{\delta}{b} \right)  \\
				& = 	y_i \, \varepsilon \left( 1- \frac{\delta}{y_i} \right) \left( 1 - \frac{\delta}{b} \right)  \\
				& \leq 	y_i \, \varepsilon \left( 1 - \frac{\delta}{b} \right)^2. 
			\end{aligned}
		\end{equation*}
		Dividing by $y_i$, we get the relative distance of $T^2(x)$ with respect to $y$:
		\begin{equation*}
			\frac{ \left| y_i - [T(T(x))]_i \right|}{y_i } \leq \varepsilon \,  \left( 1 - \frac{\delta}{b} \right)^2.
		\end{equation*}
		The right-hand side is independent of $i$, giving us a uniform bound for all components.
	\end{proof}
	
	This theorem shows that iteration with map $T$ results in linear convergence to the fixed point $y$. We have a convergence rate of $\left(1-\frac{\delta}{b}\right)^2$ for $T^2$, i.e., an average convergence rate of $1-\frac{\delta}{b}$ for $T$. More accurate bounds for the convergence rate can be obtained using the properties of the sets $E_k$. 
	\begin{corollary}\label{cor:iteration bound with E2}
		Let $x^{(0)}\in\mathbb{R}^n_+$ and $x^{(k+1)} = T(x^{(k)})$ for $k\geq 0$. Let $y$ be a fixed point of $T$. Then 
		\[ \max\limits_{1\leq i\leq n} \, \frac{|y_i - x^{(2k+2)}|}{y_i} \leq
		\left(1 - \frac{\delta^2}{\delta^2+\dmax} \right)^{2k}
		\max\limits_{1\leq i\leq n} \, \frac{|y_i - x^{(2)}|}{y_i}. \]
		In particular, the sequence $x^{(k)}$ converges to $y$ for any choice of $x^{(0)}$. 
	\end{corollary}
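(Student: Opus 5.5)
We will apply Theorem~\ref{thm:error bound iteration 2-step} repeatedly, on a set $E$ chosen so that $1-\frac{\delta}{b}=1-\frac{\delta^2}{\delta^2+\dmax}$. That equality forces $b=\delta+\frac{\dmax}{\delta}$. So the natural choice is
\[
E=E_0=\Bigl[\delta,\ \delta+\tfrac{\dmax}{\delta}\Bigr]^n ,
\]
the set already used in Proposition~\ref{prop:existence fixed point}. Indeed
\[
1-\frac{\delta}{\delta+\dmax/\delta}=1-\frac{\delta^2}{\delta^2+\dmax}.
\]

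We first check the hypotheses of Theorem~\ref{thm:error bound iteration 2-step} for $E_0$.
\begin{enumerate}
\item $T(E_0)\subseteq E_1\subseteq E_0$, exactly as shown in the proof of Proposition~\ref{prop:existence fixed point}.
\item $y\in E_0$. Since $y=T(y)$ and $y\in\mathbb{R}^n_+$, we get $y_i\ge\delta$ for all $i$. Then $y=T(y)$ once more gives $y_i\le \delta+\dmax/\delta$, by the argument describing $\widetilde{E}_1$ and $\widetilde{E}_2$ (equivalently Proposition~\ref{prop:1-step iteration bound} with $a=\delta$, $b=\infty$).
\item $x^{(2)}\in E_0$ for any $x^{(0)}\in\mathbb{R}^n_+$, by the discussion preceding Proposition~\ref{prop:existence fixed point}. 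Since $T(E_0)\subseteq E_0$, we also get $x^{(m)}\in E_0$ for all $m\ge 2$.
\end{enumerate}

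Next we induct on $k$. Set $\varepsilon_m=\max_i |y_i-x^{(m)}_i|/y_i$. For $m\ge2$, apply Theorem~\ref{thm:error bound iteration 2-step} with $x=x^{(m)}\in E_0$ and $b=\delta+\dmax/\delta$. This yields
\[
\varepsilon_{m+2}\le \Bigl(1-\tfrac{\delta^2}{\delta^2+\dmax}\Bigr)^2\varepsilon_m .
\]
Iterating from $m=2$ gives
\[
\varepsilon_{2k+2}\le \Bigl(1-\tfrac{\delta^2}{\delta^2+\dmax}\Bigr)^{2k}\varepsilon_2 ,
\]
which is the stated bound.

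Finally we deduce convergence of the whole sequence. The even subsequence converges to $y$ because the contraction factor $q=1-\frac{\delta^2}{\delta^2+\dmax}$ lies in $(0,1)$. For the odd subsequence, apply the same argument with $m=3$: $x^{(3)}\in E_0$, so $\varepsilon_{2k+3}\le q^{2k}\varepsilon_3\to0$. Alternatively, use continuity of $T$ at $y$ together with $x^{(2k+3)}=T(x^{(2k+2)})$. Hence $x^{(k)}\to y$.

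The only delicate point is confirming that $y\in E_0$ and that the iterates enter $E_0$ after exactly two steps. Both are immediate from Proposition~\ref{prop:1-step iteration bound} under the conventions $1/0=\infty$, $1/\infty=0$. We do not expect any real obstacle.
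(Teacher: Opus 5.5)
Your proposal is correct and follows essentially the same route as the paper. Both take $E=[\delta,\delta+\dmax/\delta]^n$, observe that it is $T$-invariant and contains $x^{(2)}$ and $y$, and apply Theorem~\ref{thm:error bound iteration 2-step} repeatedly with $b=\delta+\dmax/\delta$; you simply spell out the induction and the handling of the odd iterates, which the paper leaves implicit.
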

	\begin{proof}
		We have seen that after two iterations, $x^{(2)} \in E = [\delta, \delta + \frac{\dmax}{\delta}]^n$; note that this implies $y\in E$. Applying Theorem~\ref{thm:error bound iteration 2-step} to $E$, where $b=\delta + \frac{\dmax}{\delta}$, yields the desired bound. 
	\end{proof}
	
	Another important consequence of Theorem~\ref{thm:error bound iteration 2-step} is the uniqueness of the fixed point of $T$. 
	\begin{corollary}\label{cor:uniqueness fixed point}
		The map $T$ admits a unique fixed point $y$.
	\end{corollary}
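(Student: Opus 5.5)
Existence is already settled by Proposition~\ref{prop:existence fixed point}, so only uniqueness needs proof. Suppose $y$ and $z$ are both fixed points of $T$. The plan is to apply Theorem~\ref{thm:error bound iteration 2-step} with $y$ as the reference fixed point and $x = z$. Then $T^2(z)=z$, and the contraction estimate will force the relative distance between $z$ and $y$ to vanish.

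First we need a suitable invariant box. Take $E = [\delta, \delta + \frac{\dmax}{\delta}]^n$, as in the proof of Proposition~\ref{prop:existence fixed point}. That proof gives $T(E)\subseteq E_1 \subseteq E$. Every fixed point lies in $E$: if $w = T(w)$ with $w\in\mathbb{R}^n_+$, then $w = T^2(w)$. The discussion preceding Proposition~\ref{prop:existence fixed point} shows that $T^2(x^{(0)})\in E$ for any $x^{(0)}\in\mathbb{R}^n_+$. Concretely, $[T(w)]_i\geq\delta$ for all $i$, and hence $[T^2(w)]_i \le \delta + \dmax/\delta$. Therefore $y,z\in E$, and all hypotheses of the theorem hold with $b = \delta + \dmax/\delta$.

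Now set $\varepsilon = \max_i |z_i-y_i|/y_i$. Since $T^2(z) = z$, the theorem gives
\[
\varepsilon = \max_i \frac{|y_i - [T(T(z))]_i|}{y_i} \le \varepsilon\left(1-\frac{\delta}{b}\right)^2 .
\]
Because $0<\delta<b$, the factor $q=(1-\delta/b)^2$ satisfies $0\le q<1$. Then $\varepsilon(1-q)\le 0$ forces $\varepsilon = 0$, that is, $z=y$. Equivalently, Corollary~\ref{cor:iteration bound with E2} applied with $x^{(0)}=z$ makes the iteration constant at $z$ while it converges to $y$.

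No step looks like a real obstacle. The only point needing care is the justification that an arbitrary fixed point lies in the invariant box $E$, which is what makes Theorem~\ref{thm:error bound iteration 2-step} applicable. This follows from the two-step bounds above.
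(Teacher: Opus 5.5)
Your proof is correct and follows the paper's argument. The paper likewise applies the two-step contraction (via Corollary~\ref{cor:iteration bound with E2}, i.e.\ Theorem~\ref{thm:error bound iteration 2-step} with $b=\delta+\dmax/\delta$, so that $1-\delta/b = 1-\frac{\delta^2}{\delta^2+\dmax}$) to a second fixed point $z$ satisfying $T^2(z)=z$. Your write-up is slightly more careful than the paper's on two points: you work with the maximal relative distance $\varepsilon$ rather than writing the inequality componentwise, and you explicitly justify that every fixed point lies in the invariant box $E$.
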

	\begin{proof}
		Consider a fixed point $y$, which exists by Theorem~\ref{prop:existence fixed point}. Suppose $z$ is another fixed point. Applying Corollary~\ref{cor:iteration bound with E2} we obtain
		\[\frac{|y_i-z_i|}{y_i} \leq \frac{|y_i-[T(T(z))]_i|}{y_i}  \leq  \frac{|y_i-z_i|}{y_i}  \left( 1 - \frac{\delta^2}{\delta^2+\dmax} \right)^2. \]
		Since $\delta>0$, we have $1 - \frac{\delta^2}{\delta^2+\dmax} < 1$, so the equality is possible only if $y_i=z_i$ for all components, i.e., if $y=z$. 
	\end{proof}

	By Corollary~\ref{cor:limit of x^k is in E*}, the unique fixed point $y$ is in $E_*$. Therefore, for $k$ large enough $x^{(k)}$ is inside or arbitrarily close to $E_*$, so we can estimate the convergence rate using $b_*$ from Proposition~\ref{prop:limits of a_k and b_k} and its approximation \eqref{eq:a* and b* first order}.
	\begin{corollary}\label{cor:iteration bound with E*}
		The asymptotic convergence rate of $x^{(k)}$ to the fixed point $y$ is 
		\begin{equation}\label{eq:asymptotic_errbound}
			1-\frac{\delta}{b_*} = 1-\frac{\delta^2}{\dmax-\dmin} + O(\delta^4)\hspace{6mm} \mathrm{for}\; \delta \to 0.
		\end{equation}
	\end{corollary}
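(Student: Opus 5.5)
The plan is to apply Theorem~\ref{thm:error bound iteration 2-step} repeatedly on the shrinking sets $E_k$ rather than on the fixed set $[\delta,\delta+\frac{\dmax}{\delta}]^n$. After that I will extract the asymptotic rate from the limit $b_k\to b_*$, and finally expand $b_*$ for small $\delta$.

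\textbf{Step 1 (reduction).} Start from $E_0=[\delta,\delta+\frac{\dmax}{\delta}]^n$, which contains $x^{(2)}$ and the fixed point $y$. We showed $E_1\subseteq E_0$, so $a_0\le a_1$ and $b_1\le b_0$. Proposition~\ref{prop:increasing sets E_k} then gives the nested chain $E_{k+1}\subseteq E_k$. Since $T(E_k)\subseteq E_{k+1}\subseteq E_k$, each $E_k$ is $T$-invariant. Moreover $y=T^m(y)\in E_m$ for all $m$, because $y\in E_0$ and the images of $E_0$ under $T^m$ lie in $E_m$. Hence $y\in E_k$ for every $k$.

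\textbf{Step 2 (contraction on $E_k$).} The iterate $x^{(k+2)}$ lies in $E_k$ (with the shift of indices coming from the first two steps). Applying Theorem~\ref{thm:error bound iteration 2-step} with $E=E_k$ and $b=b_k$ gives
\[
\varepsilon_{k+4}\le \Bigl(1-\frac{\delta}{b_k}\Bigr)^2\varepsilon_{k+2},
\]
where $\varepsilon_m$ is the maximal relative error of $x^{(m)}$. Chaining these inequalities shows that the error after $2m$ further steps is bounded by $\prod_j(1-\delta/b_{k_j})^2$ times the initial error. Proposition~\ref{prop:limits of a_k and b_k} gives $b_k\to b_*$, and the $b_k$ are nonincreasing, so $b_k\downarrow b_*$. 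Therefore for every $\eta>0$ the per-step factor is eventually at most $1-\delta/b_*+\eta$. Taking $\limsup \varepsilon_m^{1/m}$ then yields an asymptotic linear rate of at most $1-\delta/b_*$, which is the precise meaning of the corollary's ``asymptotic convergence rate''.

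\textbf{Step 3 (expansion).} Set $D=\dmax-\dmin>0$. From \eqref{eq:limits of a_k and b_k},
\[
b_*=\frac{1}{2\delta}\Bigl(D+\delta^2+\sqrt{(D+\delta^2)^2+4\dmax\delta^2}\Bigr).
\]
Expanding the square root in powers of $\delta^2$ gives $b_*=\frac{D}{\delta}+c\,\delta+O(\delta^3)$ for an explicit constant $c$. Consequently
\[
\frac{\delta}{b_*}=\frac{\delta^2}{D+c\delta^2+O(\delta^4)}=\frac{\delta^2}{D}+O(\delta^4),
\]
which is exactly \eqref{eq:asymptotic_errbound}. Alternatively, and more cleanly, $b_*$ is the positive root of $\delta z^2+z(\dmin-\dmax-\delta^2)-\dmin\delta=0$. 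Dividing this equation by $z^2$ shows that $u=\delta/b_*$ solves a quadratic whose small root is $\delta^2/D+O(\delta^4)$.

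\textbf{Main obstacle.} The delicate point is Step 2, specifically making ``asymptotic rate'' rigorous. The theorem's factor depends on the current box, so one needs monotone convergence $b_k\downarrow b_*$ to pass to the limit. One also has to keep track of the two-step structure and the index offsets; these are harmless because they affect only constants, not the rate.
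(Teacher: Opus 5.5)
Your proposal is correct and takes the same route as the paper. The paper only remarks that the iterates eventually lie in (or arbitrarily close to) $E_*$, applies Theorem~\ref{thm:error bound iteration 2-step} with $b=b_*$, and expands $b_*$ via \eqref{eq:a* and b* first order}; your nested $E_k$, $b_k\downarrow b_*$ and $\limsup\varepsilon_m^{1/m}$ argument is the rigorous version of that sketch. One harmless slip: \eqref{eq:limits of a_k and b_k} has $4\dmin\delta^2$, not $4\dmax\delta^2$, under the square root in $b_*$, but this only changes your constant $c$ and not the conclusion $\delta/b_*=\delta^2/(\dmax-\dmin)+O(\delta^4)$ (your alternative quadratic for $u=\delta/b_*$ is correct as written).
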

	
	We put together all the results in the following theorem. 
	
	\begin{theorem}[Main result]\label{thm:main}
		Let $G$ be a (possibly directed and weighted) graph with minimum out-degree $\dmin$ and maximum out-degree $\dmax$. Let $T$ be the Fitness Centrality iteration map for $\delta > 0$. Then
		\begin{itemize}
			\item There exists a unique fixed point $y$ such that $T(y) = y$, i.e., the Fitness Centrality vector is well-defined.
			\item  The point $y$ belongs to $E_* = [a_*, b_*]^n$ given by Proposition~\ref{prop:limits of a_k and b_k}.
			\item Starting from any initial condition $x^{(0)} \in \mathbb{R}^n_+$, the sequence of standard iterates $x^{(k)}$ converges linearly to $y$.
			\item An upper bound for the convergence rate valid from $x^{(2)}$ onwards is 
			$1-\frac{\delta^2}{\delta^2+\dmax}$. 
			\item The asymptotic upper bound for the convergence rate is $1-\frac{\delta}{b_*}$, which is approximately $1-\frac{\delta^2}{\dmax-\dmin}$ for small $\delta$. 
		\end{itemize}
	\end{theorem}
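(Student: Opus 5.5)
The theorem is a summary of results already established, so I would prove it by assembling them item by item, taking care that each hypothesis is actually met.

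\emph{Existence and uniqueness.} Existence is Proposition~\ref{prop:existence fixed point}, which applies Brouwer's theorem on $E_0=[\delta,\delta+\frac{\dmax}{\delta}]^n$ with $T(E_0)\subseteq E_0$. Uniqueness is Corollary~\ref{cor:uniqueness fixed point}. There I will make explicit that any fixed point $z$ satisfies $z=T^2(z)\in E_0$, because every point of $\mathbb{R}^n_+$ lands in $E_0$ after two steps. Theorem~\ref{thm:error bound iteration 2-step} with $E=E_0$ then applies to the pair $y,z$ and forces the relative distance to vanish.

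\emph{Localization in $E_*$.} Start the iteration at $x^{(0)}=y$. The constant sequence $x^{(k)}=y$ trivially converges to $y$, so Corollary~\ref{cor:limit of x^k is in E*} gives $y\in E_*$. For completeness I would spell out the underlying argument. Pick $a_0\le\min_i y_i$ and $b_0\ge\max_i y_i$. Then $y\in E_k$ for every $k$ by Corollary~\ref{cor:1,2-step iterations}, and letting $k\to\infty$ with Proposition~\ref{prop:limits of a_k and b_k} gives $a_*\le y_i\le b_*$.

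\emph{Convergence and the rate valid from $x^{(2)}$.} Corollary~\ref{cor:iteration bound with E2} gives a contraction by the factor $(1-\frac{\delta^2}{\delta^2+\dmax})^2$ for every pair of steps starting from $x^{(2)}$; the ratio $\frac{\delta}{b}$ with $b=\delta+\frac{\dmax}{\delta}$ is exactly $\frac{\delta^2}{\delta^2+\dmax}$. This controls the even iterates. For the odd iterates I would apply Theorem~\ref{thm:error bound iteration 2-step} with starting point $x^{(3)}\in E_0$, or use the one-step estimate \eqref{eq:mainthm y-T(x) abs}. That estimate bounds the error of $T(x)$ relative to $T(x)$ rather than to $y$, which I would convert using $T(x)\le b$ and $y\ge\delta$. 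Combining both parities gives linear convergence of the whole sequence, at average per-step factor $1-\frac{\delta^2}{\delta^2+\dmax}$ from $x^{(2)}$ on.

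\emph{Asymptotic rate.} This is the only step needing a genuine argument beyond citation. Given $\eta>0$, I would show that $x^{(k)}\in[a_*-\eta,\,b_*+\eta]^n$ for all large $k$. This follows from $x^{(k)}\in E_k$ and $a_k\to a_*$, $b_k\to b_*$, where $E_k$ is built from $E_0=[\delta,\delta+\frac{\dmax}{\delta}]^n$ after the first two steps. The natural next move is to apply Theorem~\ref{thm:error bound iteration 2-step} with $E=E_k$ and $b=b_k$. The difficulty is that the theorem requires $T(E)\subseteq E$, and we only have $T(E_k)\subseteq E_{k+1}\subseteq E_k$.

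Looking at the proof of the theorem, the bound $b$ is used only to control $[T(x)]_j$ and $y_j$. I would therefore note that the same proof gives the factor $(1-\frac{\delta}{\beta})^2$ whenever $T(x)$ and $y$ have all components at most $\beta$. Taking $\beta=b_k$ yields a two-step factor $(1-\frac{\delta}{b_k})^2\to(1-\frac{\delta}{b_*})^2$. The limsup of the per-step rate is then at most $1-\frac{\delta}{b_*}$. The small-$\delta$ approximation follows from \eqref{eq:a* and b* first order}, giving \eqref{eq:asymptotic_errbound}.
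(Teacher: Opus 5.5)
Your proposal is correct and follows the paper's route. The paper proves the main theorem simply by assembling Proposition~\ref{prop:existence fixed point}, Corollary~\ref{cor:uniqueness fixed point} and the surrounding corollaries. You add rigor the paper leaves implicit: you handle the odd iterates explicitly, and you give a precise version of the paper's informal ``$x^{(k)}$ is arbitrarily close to $E_*$'' argument for the asymptotic rate.

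The obstacle you flag in the asymptotic step is not a real one. The chain $T(E_k)\subseteq E_{k+1}\subseteq E_k$ already gives $T(E_k)\subseteq E_k$, and $y=T^k(y)\in E_k$ because $y\in E_0$. So Theorem~\ref{thm:error bound iteration 2-step} applies verbatim with $E=E_k$ and $b=b_k$, and your modified-proof workaround is valid but unnecessary.
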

	
	Note that the positivity of $\delta$ is crucial for the bounds provided in Theorem~\ref{thm:error bound iteration 2-step} and its corollaries. If it were $\delta=0$, then from \eqref{eq:error bound iteration 2-step} we could have that the relative error remains the same between iterates. Indeed, numerical experiments show that the Fitness Centrality iterations fail to converge for $\delta=0$.
	
	\section{Convergence acceleration} \label{sec:comp}
	\label{sec:computational}
	The convergence rate proven in the previous section is $1-\frac{\delta}{b}$, which is close to 1 when $\delta$ is small and $b$ is (moderately) large. Therefore, while the convergence is linear, it is very slow, as can be further seen from the numerical experiments of Section~\ref{sec:num_exp}.
	In this section, we introduce alternative methods to compute a new iteration step; here, as well as in Section~\ref{sec:num_exp}, we indicate by $\{x^{(k)}\}_{k \in \mathbb{N}}$ any sequence of iterates obtained using any combination of the available steps described below. We use a subscript when we need to specify which method has been applied in the last step; for example, the standard iteration step is denoted as  
	\[x^{(k+1)}_{\mathrm{std}} = T(x^{(k)}).\]
	
	The first two approaches are based on linear acceleration: the new point is obtained as a linear combination of $m$ points (typically the previous iterates).
	The first technique uses $m=2$ points; given $x^{(k)}$, it constructs the new point as the average of $x^{(k)}$ and $T(x^{(k)})$. 
	The second technique is \emph{Anderson acceleration}, which constructs the new point as the linear combination of the previous $m$ iterates with appropriately chosen coefficients. 
	
	The third and most important technique is \emph{Newton's method}, which has a quadratic rate of convergence in the neighborhood of the solution, and can significantly speed up the final stages of the iterative process. 
	
	The fourth technique is gradient descent, which was introduced in \cite{CostFunctions2026} for computing the Fitness Centrality of undirected graphs.

	\subsection{Linear acceleration with two terms-averaging}
	In numerical experiments, one frequently observes that the sequence of standard iterates $x^{(k)}_{\mathrm{std}}$ converge in an oscillating manner to $y$ depending on the parity of $k$: i.e., they converge from below for odd $k$ and from above for even $k$ (or vice versa). This behavior has a heuristic interpretation: if the components of $x$ are small, then the components of $\inv(x)$ are large, and therefore those of $T(x)=\delta \one + A \cdot \inv(x)$ are also large. Then, the components of $\inv(T(x))$ are small, and so are the components of $T^2(x)$. The following lemma provides a sufficient condition for the oscillating convergence. 
	
	\begin{lemma}
		Let $y$ be the fixed point of $T$ and let $x\in \mathbb{R}^n_+$ be such that $y_i \geq x_i$ for all components $i$. Then $y_i \leq [T(x)]_i $ for all components $i$.  
		If on the other hand $y_i \leq x_i$ for all components $i$,  then $y_i \geq [T(x)]_i $ for all components $i$.  
	\end{lemma}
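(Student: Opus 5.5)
The plan is to compare $T(x)$ with $T(y)=y$ componentwise, exploiting the fact that $T$ is order-reversing on $\mathbb{R}^n_+$. The only ingredients are the fixed-point identity $y = T(y) = \delta\one + A\cdot\inv(y)$ and the nonnegativity of the entries of $A$.

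For the first claim, assume $0 < x_j \leq y_j$ for every $j$. Since $t \mapsto 1/t$ is decreasing on $\mathbb{R}_+$, this gives $1/x_j \geq 1/y_j$ for every $j$. Because $A_{ij} \geq 0$, we can multiply by $A_{ij}$, sum over $j$, and add $\delta$ without changing the direction of the inequality. This yields
\[ [T(x)]_i = \delta + \sum_{j=1}^n A_{ij}\frac{1}{x_j} \;\geq\; \delta + \sum_{j=1}^n A_{ij}\frac{1}{y_j} = [T(y)]_i = y_i \]
for each $i$.

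The second claim is symmetric. From $y_j \leq x_j$ we get $1/x_j \leq 1/y_j$, and the same nonnegative weighted sum gives $[T(x)]_i \leq y_i$.

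There is no real obstacle here. The only points to flag are that positivity of $x$ is needed for $\inv(x)$ to be defined and for the reciprocal to be monotone, and that the argument uses nonnegativity of $A$, not its exact structure. Equivalently, it is the same monotonicity step used in Proposition~\ref{prop:1-step iteration bound}, applied with the componentwise bound $y$ in place of the scalar bounds $a,b$.
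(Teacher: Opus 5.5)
Your proof is correct and is essentially the paper's own argument: the paper also inverts the componentwise inequality between $x$ and $y$ and compares $[T(x)]_i$ with $[T(y)]_i = y_i$ term by term, using the nonnegativity of the $A_{ij}$. Your remarks on positivity of $x$ and nonnegativity of $A$ make explicit what the paper leaves implicit.
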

	\begin{proof}
		We have
		\[y_i = [T(y)]_i = \delta + \sum\limits_{j=1}^n A_{ij} \frac{1}{y_j}, \hspace{5mm}
		[T(x)]_i = \delta + \sum\limits_{j=1}^n A_{ij} \frac{1}{x_j}.\]
		From $y_i \geq x_i$  we get $\frac{1}{y_j} \leq \frac{1}{x_j}$, and comparing term by term we get the desired result. The proof is the same in the case when $y_i \leq x_i$. 
	\end{proof}
	
	Since $x_i \leq y_i \leq [T(x)]_i$, the average $\frac{1}{2}\left(x_i + [T(x)]_i\right)$  is closer to $y_i$  than either $x_i$ or $[T(x)]_i$. 
	Because $T$ swaps large and small components, often the same ordering between $x_i$, $y_i$, and $[T(x)]_i$ holds  even without the assumption that all components of $x$ are smaller (or larger) than those of $y$.  Therefore, for any iterate $x^{(k)}$, it is reasonable to consider the average of $x^{(k)}$ and $T(x^{(k)})$.
	
	\begin{definition}\label{def:averaged_step}
		The \emph{averaged step} is defined as $x^{(k+1)}_{\mathrm{avg}} = \frac{1}{2} \left(x^{(k)} + T(x^{(k)}) \right)$.
	\end{definition}
	Observe that the standard iteration with the map $T$ generates the sequence $(x^{(k)}, T(x^{(k)}), T^2{(x^{(k)})}, \ldots)$; the average of the first two elements is precisely $x^{(k+1)}_{\mathrm{avg}}$. Using the averaged step, we replace the second element of the sequence with $x^{(k+1)}_{\mathrm{avg}}$, at essentially the same computational cost as evaluating $T(x^{(k)})$.
	
	Numerical tests suggest that instead of exclusively employing averaged steps, it is better to alternate between one averaged step and one standard step. We evaluate this strategy and compare it to other possible strategies in Section~\ref{sec:num_exp}. For the sake of clarity, we highlight that the resulting sequence with the alternating scheme is
	\[x^{(k-1)}, \; x^{(k)}_{\mathrm{std}} = T(x^{(k-1)}),\;  x^{(k+1)}_{\mathrm{avg}}= \frac{1}{2} \left(x^{(k)}_{\mathrm{std}} + T(x^{(k)}_{\mathrm{std}})\right),\;   x^{(k+2)}_{\mathrm{std}} =T(x^{(k+1)}_{\mathrm{avg}}),\; \ldots \,.  \]
	Under the assumption that $x^{(k)} = T(x^{(k-1)})$, we estimate the distance between $x^{(k+1)}_{\mathrm{avg}}$ and the fixed point $y$. Suppose that $E = [a,b]^n$ is a set satisfying $T(E) \subseteq E$ and $x^{(k-1)}\in E$.
	
	First, suppose that $x^{(k-1)}_i \leq y_i$ for all $i$. Setting $x=x^{(k-1)}$ in Equations~\eqref{eq:mainthm y-T(x) noabs} and~\eqref{eq:mainthm y-T2(x) noabs} and adding them, we obtain
	\begin{equation*}
		\begin{aligned}
			y_i - \frac{1}{2} \left(x^{(k)}_i + [T(x^{(k)})]_i\right) &=  \sum\limits_{j=1}^n \frac{A_{ij}}{y_j} \cdot \frac{1}{2} \left(\frac{x^{(k-1)}_j - y_j}{x^{(k-1)}_j} + \frac{x^{(k)}_j - y_j}{x^{(k)}_j} \right).
		\end{aligned}
	\end{equation*}
	Since $x^{(k-1)}_j - y_j$ and $x^{(k)}_j - y_j$ have different signs, the sum is smaller in absolute value than either of them, leading to a convergence rate smaller than $1-\frac{\delta}{b}$.  
	
	Note that if neither of the conditions ``$x^{(k)}_i \leq y_i$ for all $i$'' nor ``$x^{(k)}_i \geq y_i$ for all $i$'' holds, after an averaged step we no longer know whether $x^{(k+1)}_{\mathrm{avg}}$ has components larger or smaller than those of $y$. 
	Denote by $\varepsilon$ the maximum relative distance of $x^{(k-1)}_i$ with respect to $y_i$.  In the worst case, without the assumption $x^{(k-1)}_i \leq y_i$, applying~\eqref{eq:mainthm y-T(x) abs} yields the following bound
	\begin{equation*}
		\begin{aligned}
			\left| y_i - \frac{1}{2} \left(x^{(k)}_i + [T(x^{(k)})]_i\right) \right| &\leq   \sum\limits_{j=1}^n\frac{A_{ij}}{y_j} \cdot \frac{1}{2}  \left( \left|\frac{x^{(k-1)}_j - y_j}{x^{(k-1)}_j}\right| + \left|\frac{[T(x^{(k-1)})]_j - y_j}{[T(x^{(k-1)})]_j}\right| \right)\\
			& \leq  (y_i - \delta) \, \frac{1}{2} \left(\varepsilon+ \varepsilon \left(1-\frac{\delta}{b}  \right) \right)\\
			& \leq y_i \, \varepsilon \left(1-\frac{\delta}{b} \right) \left(1-\frac{\delta}{2b} \right),
		\end{aligned}
	\end{equation*}
	which is slightly worse than the rate $\left(1-\frac{\delta}{b}  \right)^2$ of Theorem~\ref{thm:error bound iteration 2-step} for two standard steps. 
	
	\subsection{Anderson acceleration}
	The approach described in the previous section constructs the new iterate $x^{(k+1)}_{\mathrm{avg}}$ as a linear combination of the two previous (standard) iterates with weights $\frac{1}{2}$. This idea can be extended by considering a linear combination of the previous $m$ iterates
	\begin{equation}\label{eq:linear_acceleration_general_x}
		x^{(k+1)}_{\mathrm{acc}} = \alpha_1 x^{(k)} + \alpha_2 x^{(k-1)} + \ldots + \alpha_m x^{(k-m+1)},
	\end{equation}
	for some coefficients $\alpha_l$. This technique is known as \emph{acceleration} for fixed-point iterations. For a thorough review of acceleration approaches, see \cite{Saad2025}. 
	
	What fundamentally characterizes an acceleration method is the strategy employed to select these coefficients. A necessary condition, called the \emph{consistency condition}, is that  $\sum_{l=1}^m \alpha_l = 1$. Indeed, suppose that all $m$ previous iterates are close to the fixed point $y$. Then, the right-hand side of \eqref{eq:linear_acceleration_general_x} is approximately $(\alpha_1 + \ldots + \alpha_m) y$; in order not to stray further away from the fixed point we require that the coefficients sum up to 1.
	
	 One of the most widely used methods is \emph{Anderson acceleration} \cite{Anderson1965}; for a comparison of this approach to other acceleration techniques, see~\cite{BrezinskiRedivoSaad2018}. We now describe how Anderson method's coefficients are chosen. Consider the function $f: \mathbb{R}_+^n\to \mathbb{R}^n$, defined as
	\begin{equation}\label{eq:f-residue_definition}
		f(x) = x - T(x).
	\end{equation}
	Note that $f(x)=0$ if and only if $x$ is the fixed point of $T$; in general, the magnitude of $f$ is an indicator of how close $x$ is to being the fixed point. For an iterate $x^{(k)}$, let us call $f^{(k)} = f(x^{(k)})$ its residual. 
	Now, consider the linear combination of residuals using the same coefficients as in \eqref{eq:linear_acceleration_general_x}:
	\begin{equation}\label{eq:linear_acceleration_general_f}
		\widetilde{f}^{(k+1)} = \alpha_1 f^{(k)} + \alpha_2 f^{(k-1)} + \ldots + \alpha_m f^{(k-m+1)}.
	\end{equation}
	The equation can be interpreted as the linearized residual  of $f$ at  $x^{(k+1)}$; that is, $\widetilde{f}^{(k+1)}$ is approximately $f^{(k+1)} = f(x^{(k+1)}_{\mathrm{acc}})$.
	The rationale behind Anderson acceleration is to select the coefficients $\alpha_l$ that minimize the 2-norm of the right-hand side of \eqref{eq:linear_acceleration_general_f}.
	
	\begin{definition}\label{def:Anderson_step}
		Given $m$ iterates $x^{(k-m+1)}, \ldots, x^{(k)}$, the \emph{Anderson step} is 
		\begin{equation*}\label{eq:Anderson_step}
			x^{(k+1)}_{\mathrm{And}} = \alpha_1 x^{(k)} + \alpha_2 x^{(k-1)} + \ldots + \alpha_m x^{(k-m+1)},
		\end{equation*}
		where the coefficients are obtained as 
		\begin{equation*}
			(\alpha_1, \ldots, \alpha_m) =\,  \mathrm{argmin}\left\{
			\left\| \sum\limits_{l=1}^m \alpha_k f^{(k-l+1)} \right\|_2 \text{ s.t. } \sum\limits_{l=1}^m \alpha_l=1
			\right\}.
		\end{equation*}
	\end{definition}
	In practice, to apply one Anderson step we need to solve a least-squares problem with a tall rectangular $n\times m$ matrix, subject to a linear constraint. While using Anderson acceleration reduces the number of iterations needed, it is often more advantageous not to use Anderson steps exclusively, but to interleave them with ``simple'' steps. For the Fitness Centrality iteration, numerical evidence suggests using either standard iterations with the map $T$, or averaged steps (Definition~\ref{def:averaged_step}) as simple steps.
	
	\subsection{Newton's method}
	The averaged and Anderson steps accelerate the convergence, but they still yield linear convergence to the solution. In this section we describe Newton's method \cite[Ch. 3]{Fletcher2000}, which instead exhibits quadratic convergence. 
	
	We are interested in finding the zero of the function $f$ from \eqref{eq:f-residue_definition}, as it coincides with the fixed point of $T$. Let $J_f(x)$ denote the Jacobian matrix of $f$ at $x$; the expression for its $ij$-th component is
	\[[J_f(x)]_{ij} = \frac{\partial [f(x)]_i}{\partial x_j} = \delta^{\mathrm{kron}}_{ij} + A_{ij} \frac{1}{x_j^2},\]
	where $\delta^{\mathrm{kron}}_{ij}$ denotes the Kronecker delta of $i$ and $j$. Denoting by $D_x$ the diagonal matrix with $i$-th diagonal entry equal to $x_i$, the Jacobian can be equivalently written in matrix notation as
	\[J_f(x) =  I + A D_x^{-2}.\]
	
	A step of Newton's method is obtained by solving a linear system with the Jacobian. 
	
	\begin{definition}\label{def:Newton_step}
		The \emph{Newton step} is defined as 
		\[x^{(k+1)}_{\mathrm{New}} = x^{(k)} - J_f(x^{(k)})^{-1} \, f(x^{(k)}).\]
	\end{definition}
	
	Newton's method utilizes first-order information on $f$, namely the derivatives present in $J_f(x)$, generally achieving quadratic convergence and requiring far fewer steps than  any of the previously described methods.
	
	The drawback is that Newton's method is computationally more expensive, as each step requires the solution of a linear system of size $n$. If the graph is sparse, meaning that $A$ has $O(n)$ nonzero entries, then the linear system can be solved efficiently by using a sparse direct solver (for very large $n$, an iterative method may be necessary).
	
	Furthermore, convergence is guaranteed only in a neighborhood of the solution: using Newton's method when the distance to the solution is too large may cause the sequence to fail to converge. 
	
	Therefore, it is advisable to use a stationary method for the initial iterations (for example, a combination of Anderson acceleration and standard or averaged steps) and then switch to Newton's method once the iterates are sufficiently close to the fixed point. 
	
	\subsection{Gradient descent}
	We describe the gradient descent approach, originally proposed in \cite{CostFunctions2026}. 
	Consider an undirected graph, meaning that $A=A^\top$. Under the change of variables $h: \mathbb{R}_+^n \to \mathbb{R}^n$, defined component-wise by $z_i = [h(x)]_i = -\log(x_i)$, the continuous version of the dynamical system obtained from \eqref{eq:FC iteration} is now integrable and admits a potential given by
	\[\widetilde{U}: \mathbb{R}^n \to \mathbb{R}, \quad
	\widetilde{U}(z) = \frac{1}{2} \sum\limits_{i=1}^n  \sum\limits_{j=1}^n A_{ij} e^{z_i} e^{z_j} + \delta  \sum\limits_{i=1}^n e^{z_i} - \sum\limits_{i=1}^n z_i.\]
	In the $x$-domain, the potential becomes
	\begin{equation*}
		\begin{aligned}
			U:  \mathbb{R}_+^n \to \mathbb{R}, \quad 
			U(x) &= \frac{1}{2} \sum\limits_{i=1}^n  \sum\limits_{j=1}^n A_{ij} \frac{1}{x_i} \frac{1}{x_j}  + \delta \sum\limits_{i=1}^n \frac{1}{x_i} - \sum\limits_{i=1}^n \log\left(\frac{1}{x_i}\right) \\
			&=\frac{1}{2} \inv(x)^\top A \inv(x) + \delta\, \one^\top \inv(x) - \one^\top \log\inv(x),
		\end{aligned}
	\end{equation*}
	where the logarithm is taken entrywise. In \cite{CostFunctions2026}, the authors prove that the potential is strictly convex, and therefore admits a unique minimum, which is precisely the fixed point of $T$. Therefore, one can apply the gradient descent algorithm to find this minimum. The gradient is, component-wise, 
	\[ [\nabla \widetilde{U}(z)]_i = \sum\limits_{j=1}^n A_{ij} e^{z_j} e^{z_i} + \delta\, e^{z_i} -1.\]
	\begin{definition}\label{def:gradient_step}
		The \emph{gradient step} in the $z$-domain is defined as 
		\[z^{(k+1)}_{\mathrm{grd}} = z^{(k)} - \gamma_k \, \nabla \widetilde{U}(z^{(k)}),\]
		where $\gamma_k$ is a step-size found by backtracking, ensuring that $\widetilde{U}(z^{(k+1)}) < \widetilde{U}(z^{(k)})$.
		The corresponding vector in the $x$-domain has components $x^{(k+1)}_{\mathrm{grd},i} = \exp(-z^{(k+1)}_{\mathrm{grd},i})$.
	\end{definition}
	Naturally, once the objective is reformulated as a convex minimization problem, any of the many available optimization methods can be used; see, for example, \cite{BoydVandenberghe2004, Fletcher2000}.

	\section{Numerical experiments} \label{sec:num_exp}
	Having seen the multiple approaches that can be used to speed up the convergence, we now compare their performance  on real-world networks. 
	
	To measure how close a point $x$ is to being a fixed point, we employ two metrics:
	\begin{itemize}
		\item the (relative) backward error, $\errT(x) \coloneqq\max\limits_{1\leq i\leq n} \, \dfrac{|x_i - [T(x)]_i|}{x_i}$; \vspace{0.2ex}
		\item the (relative) forward error, $\erry(x) \coloneqq \max\limits_{1\leq i\leq n} \, \dfrac{|x_i - y_i|}{y_i}$.
	\end{itemize}

The backward error can be computed on the fly at each iteration $x^{(k)}$; this makes it the ideal (and only) candidate for being the stopping criterion: when $\errT(x^{(k)})$ falls below a certain tolerance \texttt{tol}, the algorithm terminates. 
	
	We note that the numerator of the backward error $\errT(x^{(k)})$ is precisely the residual norm $\|f^{(k)}\|_2$. Furthermore, when standard steps are used, the backward error is the relative distance between two consecutive iterates; when Anderson acceleration is employed, computing the residual $f^{(k)}$ is already required to obtain the Anderson step. In these two cases, calculating the backward error does not incur any additional computational cost. 
	
	On the other hand, the forward error is not (exactly) computable since the fixed point $y$ is not known, therefore it cannot be used as a stopping criterion.
	In order to assess the accuracy of the approximations produced by an iterative procedure, however, we can estimate the forward error by using a reference
	solution $\hat y\approx y$ obtained, for instance, performing additional iterations and working in higher precision
	(details are provided below). This is also useful to compare the accuracy of the iterates with 
	the theoretical results provided by Theorem~\ref{thm:error bound iteration 2-step}. 

	The possible iterative steps that can be used are the following: 
	\begin{enumerate}[itemsep=1.2ex,label=\alph*)]
		\item $x^{(k+1)}_{\mathrm{std}} = T(x^{(k)})$, standard step (Definition~\ref{def:FC_step});
		\item $x^{(k+1)}_{\mathrm{avg}}= \frac{1}{2} \left(x^{(k)} + T(x^{(k)}) \right)$, averaged step (Definition~\ref{def:averaged_step}); 
		\item $x^{(k+1)}_{\mathrm{And}}=\sum\limits_{l=1}^m \alpha_l \, x^{(k+1-l)}$, Anderson step (Definition~\ref{def:Anderson_step});
		\item $x^{(k+1)}_{\mathrm{New}}= x^{(k)} - J_f(x^{(k)})^{-1} \, f(x^{(k)})$, Newton step (Definition~\ref{def:Newton_step});
		\item $x^{(k+1)}_{\mathrm{grd}}$, gradient descent step, valid only for undirected graphs (Definition~\ref{def:gradient_step}).
	\end{enumerate}
	
	We employ different strategies that combine the above steps. Indeed, Newton's method can be applied only in the proximity of the solution, and Anderson acceleration works best if interlaced with standard steps. Furthermore, all methods benefit from applying some standard steps at the outset of the iteration. 
	
	The algorithm continues iterating until the backward error reaches a prescribed tolerance \texttt{tol}. If Newton's method is desired, then the strategy switches to the Newton step if the backward error is below a tolerance \texttt{tol\_Newton}. 
	
	We compare the following strategies:
	\begin{enumerate}
		\item \textit{T-map strategy}: always use standard steps until convergence. 
		\item \textit{Averaged strategy}:
		\begin{itemize}
			\item do 2 standard steps;
			\item alternate between standard and averaged steps until convergence;
			\item \textit{optional:} if the current error is below  \texttt{tol\_Newton}, use Newton steps instead. 
		\end{itemize}
		\item \textit{Anderson strategy}: 
		\begin{itemize}
			\item choose parameters $m$ (window size) and $q$ (how many standard steps before an Anderson step);
			\item do 2 standard steps;
			\item do $q$ standard steps, followed by an Anderson step with on the previous $m$ iterates; repeat until convergence;
			\item \textit{optional:} if the current error is below  \texttt{tol\_Newton}, use Newton steps instead. 
		\end{itemize}
		\item \textit{Gradient strategy}:
		\begin{itemize}
			\item do 6 standard steps;
			\item use gradient steps until convergence.
		\end{itemize}
		
		\medskip 
		
	Indeed, we found that performing 6 standard steps before activating the gradient strategy promotes stability of the overall iterative process.
	\end{enumerate}
	
	The Anderson and averaged approaches are presented both with and without the use of Newton's method. This leads to six different strategies in total, for which we plot the backward and forward errors. For the forward error, the 
  ``exact" solution $\hat y$ we use for reference is obtained as follows: first, the solution is computed with the averaged+Newton strategy in double-precision arithmetic; then, additional iterations (two Newton steps for Figures~\ref{fig:exp_celegans_fwd} and~\ref{fig:exp_atc_fwd}, and six standard steps for Figure~\ref{fig:exp_BACI96_fwd}) are computed in quadruple-precision arithmetic. While this may seem to favor the averaged strategy, we stress that the other methods exhibit convergence to the reference solution too. Furthermore, the backward error can be used as a safety check to verify how close both the obtained numerical solution and the computed iterates are to being the true fixed point. In all plots, there is a clear correlation between the backward and forward errors. 
	
	We present three real-world networks on which we test the strategies. We set the tolerance of the Newton switch to \texttt{tol\_Newton}~$=10^{-2}$ and the tolerance of terminating the algorithm to \texttt{tol}~$=10^{-10}$. The definition of the map $T$ depends on $\delta$, which can be seen as a regularization parameter. When $\delta$ is small, we are closer to the ideal solution (that is, with $\delta=0$); however, the iterations of the Fitness Centrality for a generic graph usually fail to converge for $\delta=0$. Furthermore, excessively small values of $\delta$ lead to slow convergence. In  \cite{regularizedEFC2018}, the authors analyze the components of the obtained solution, finding that for values of $\delta$ between $10^{-6}$ and $10^{-1}$, the entries of the solution are stable. We found that values of $\delta$ between $10^{-3}$ and $10^{-2}$ provide a good balance between stability and convergence speed; in the experiments we set $\delta=10^{-2}$. 
	
	For the Anderson strategy, we use a window size $m=4$ and perform an Anderson step every $q=5$ standard steps. We also plot the asymptotic theoretical bound for the forward error (Corollaries~\ref{cor:iteration bound with E2} and~\ref{cor:iteration bound with E*}) starting from the iteration $k_0=20$, which is
	\[\text{upper bound at } k \text{-th step} = \left(1-\frac{\delta}{b_*}\right)^{k-k_0} \cdot \erry(x^{(k_0)}).\] 
	
	\paragraph{\textit{C. elegans} brain network}
	We start with an undirected and weighted graph with $n=297$ nodes, representing the neural connections of \textit{Caenorhabditis elegans}, originally published in \cite{Celegans1986}. The graph data\footnote{Available at \url{http://konect.cc/networks/dimacs10-celegansneural/}} 
	is taken from the Konect Database \cite{konect}. The plots of the errors for the different strategies are represented in Figure~\ref{fig:exp_celegans}.
	
Without using Newton steps, the strategy with the highest convergence rate is the Anderson strategy, followed by the gradient strategy, which is slightly faster than the averaged strategy. The slowest one is T-map, exhibiting stagnating behavior. The error bound, although quite close to the forward errors of the T-map strategy, has limited practical utility: its convergence rate (Corollary~\ref{cor:iteration bound with E*}) is $1-\frac{\delta}{b_*} \approx 1 - 6\cdot 10^{-8}$, so the upper bound remains almost constant. 
	
	The use of Newton's method drastically improves the convergence speed, lowering the error from $10^{-2}$ to below $10^{-13}$ in just three (Anderson strategy) or four (averaged strategy) iterations.
	
	%%% C. elegans brain network, undirected, weighted
	\begin{figure}
		\def\pathtodir{dataset/celegans-w}
		\begin{subfigure}{1\textwidth}
			\begin{tikzpicture}
				\begin{semilogyaxis}[
					xlabel={iterations $k$},
					ylabel={backward error},
					width=\textwidth, height=6.5cm,
					ymin=10^-16, ymax=40,
					xmin=0,xmax=70,
					xlabel style={yshift=2mm, },
					xticklabel style={font=\small,},
					yticklabel style={font=\small,/pgf/number format/fixed,},
					legend pos = south west,
					legend cell align={left},
					legend style={font=\tiny, nodes={inner sep=1pt}},
					]
					
					\addplot[mark=none, dashed, thin, black, samples=2, domain=0:100] {1e-2}; \label{tolN}% horizontal line			
					\addplot[mark=none, dotted, thin, black, samples=2, domain=0:100] {1e-10}; \label{tol}% horizontal line			
					
					\addplot [color=green!70, mark = square, mark size = 1.2pt, mark options={solid}, densely dashed] table {\pathtodir/err_iter_2avgNN.txt};
					\addplot [color=green!70, mark = square*, mark size = 1.2pt] table {\pathtodir/err_iter_2avg.txt};
					\addplot [color=red, densely dashed, mark = o, mark size = 1.2pt,  mark options={solid}, densely dashed] table {\pathtodir/err_iter_3AndNN.txt};
					\addplot [color=red, mark = *, mark size = 1.2pt] table {\pathtodir/err_iter_3And.txt};
					\addplot [color=blue, mark = x, mark size = 1.9pt] table {\pathtodir/err_iter_1Tmap.txt};
					\addplot [color=purpleG, mark = triangle] table {\pathtodir/err_iter_4grad.txt};
					
					\addlegendentry{\texttt{tol\_Newton}}
					\addlegendentry{target \texttt{tol}}
					\addlegendentry{Averaged}
					\addlegendentry{Averaged+Newton}
					\addlegendentry{Anderson}
					\addlegendentry{Anderson+Newton}
					\addlegendentry{T-map}
					\addlegendentry{Gradient}
				\end{semilogyaxis}
			\end{tikzpicture}	
			\caption{Backward error  $\max\, \{|x_i^{(k)}-[T(x^{(k)})]_i| / |x_i^{(k)}|$ for $1\leq i \leq n\}$}
			\label{fig:exp_celegans_bwd}
		\end{subfigure}%
		%\hfill%	
		
		\begin{subfigure}{1\textwidth}%
			\begin{tikzpicture}
				\begin{semilogyaxis}[
					xlabel={iterations $k$},
					ylabel={forward error},
					width=\textwidth,height=6.5cm,
					ymin=10^-16,ymax=600,
					xmin=0,xmax=70,
					xlabel style={yshift=2mm, },
					xticklabel style={font=\small,},
					yticklabel style={font=\small,/pgf/number format/fixed,},
					legend pos = south west,
					legend cell align={left},
					legend style={font=\tiny, nodes={inner sep=1pt}},
					]
					
					\addplot [color=black, mark = none, dotted, line width = 1.5pt] table {\pathtodir/err_bounds.txt};
					\addplot [color=green!70, mark = square, mark size = 1.2pt, mark options={solid}, densely dashed] table {\pathtodir/err_sol_2avgNN.txt};
					\addplot [color=green!70, mark = square*, mark size = 1.2pt] table {\pathtodir/err_sol_2avg.txt};
					\addplot [color=red, densely dashed, mark = o, mark size = 1.2pt,  mark options={solid}, densely dashed] table {\pathtodir/err_sol_3AndNN.txt}; 
					\addplot [color=red, mark = *, mark size = 1.2pt] table {\pathtodir/err_sol_3And.txt};
					\addplot [color=blue, mark = x, mark size = 1.9pt] table {\pathtodir/err_sol_1Tmap.txt}; 
					\addplot [color=purpleG, mark = triangle] table {\pathtodir/err_sol_4grad.txt}; 
					
					\addlegendentry{upper bound}
					\addlegendentry{Averaged}
					\addlegendentry{Averaged+Newton}
					\addlegendentry{Anderson}
					\addlegendentry{Anderson+Newton}
					\addlegendentry{T-map}
					\addlegendentry{Gradient}			
				\end{semilogyaxis}
			\end{tikzpicture}	
			\caption{Forward error $\max\, \{|x_i^{(k)}-y_i| / |y_i|$ for $1\leq i \leq n\}$}
			\label{fig:exp_celegans_fwd}
		\end{subfigure}%
		\caption{Error comparison of six different strategies for computing Fitness Centrality on the \textit{C. elegans} brain network (undirected, weighted)}
		\label{fig:exp_celegans}
	\end{figure}

	\paragraph{Air Traffic Control}
	In Figure~\ref{fig:exp_atc} we show the errors of the second example, which is a directed and unweighted network obtained from the Preferred Route Database of the US National Flight Data Center. Nodes are airports or service centers; a direct edge from $i$ to $j$ is present if it represents a segment of a preferred route. The graph data\footnote{Available at \url{http://konect.cc/networks/maayan-faa/}} 
	is taken from the Konect Database \cite{konect}; we restrict our analysis to the subgraph formed by the largest strongly connected component, which contains 792 vertices.
	
	The convergence of the three stationary methods stagnates, as can be seen from both plots. The graph is directed, so it falls outside the framework in which the gradient method has been developed;  indeed, the gradient strategy fails to converge if applied. 
	In this experiment, incorporating Newton's method is crucial for achieving convergence, as the stationary methods alone would require an excessively high number of steps to reach the solution.
	
	%%% Air Traffic Control network, directed
	\begin{figure}
		\def\pathtodir{dataset/atc}
		\begin{subfigure}{1\textwidth}
			\begin{tikzpicture}
				\begin{semilogyaxis}[
					xlabel={iterations $k$},
					ylabel={backward error},
					width=13cm, height=6.5cm,
					ymin=0.5*10^-16, ymax=3,
					xmin=0,xmax=70,
					xlabel style={yshift=2mm, },
					xticklabel style={font=\small,},
					yticklabel style={font=\small,/pgf/number format/fixed,},
					legend pos = south west,
					legend cell align={left},
					legend style={font=\tiny, nodes={inner sep=1pt}},
					]
					
					\addplot[mark=none, dashed, thin, black, samples=2, domain=0:100] {1e-2}; \label{tolN}% horizontal line			
					\addplot[mark=none, dotted, thin, black, samples=2, domain=0:100] {1e-10}; \label{tol}% horizontal line			
										
					\addplot [color=green!70, mark = square, mark size = 1.2pt, mark options={solid}, densely dashed] table {\pathtodir/err_iter_2avgNN.txt};
					\addplot [color=green!70, mark = square*, mark size = 1.2pt] table {\pathtodir/err_iter_2avg.txt};
					\addplot [color=red, densely dashed, mark = o, mark size = 1.2pt,  mark options={solid}, densely dashed] table {\pathtodir/err_iter_3AndNN.txt};
					\addplot [color=red, mark = *, mark size = 1.2pt] table {\pathtodir/err_iter_3And.txt};
					\addplot [color=blue, mark = x, mark size = 1.9pt] table {\pathtodir/err_iter_1Tmap.txt};
					%				\addplot [color=purpleG, mark = triangle] table {\pathtodir/err_iter_4grad.txt};
					
					\addlegendentry{\texttt{tol\_Newton}}
					\addlegendentry{target \texttt{tol}}
					\addlegendentry{Averaged}
					\addlegendentry{Averaged+Newton}
					\addlegendentry{Anderson}
					\addlegendentry{Anderson+Newton}
					\addlegendentry{T-map}				
					%				\addlegendentry{Gradient}
				\end{semilogyaxis}
			\end{tikzpicture}	
			\caption{Backward error  $\max\, \{|x_i^{(k)}-[T(x^{(k)})]_i| / |x_i^{(k)}|$ for $1\leq i \leq n\}$}
			\label{fig:exp_atc_bwd}
		\end{subfigure}%
		%	\hfill
		
		\begin{subfigure}{1\textwidth}%
			\begin{tikzpicture}
				\begin{semilogyaxis}[
					xlabel={iterations $k$},
					ylabel={forward error},
					width=13cm,height=6.5cm,
					ymin=10^-16,ymax=2000,
					xmin=0,xmax=70,
					xlabel style={yshift=2mm, },
					xticklabel style={font=\small,},
					yticklabel style={font=\small,/pgf/number format/fixed,},
					legend pos = south west,
					legend cell align={left},
					legend style={font=\tiny, nodes={inner sep=1pt}},
					]
					
					\addplot [color=black, mark = none, dotted, line width = 1.5pt] table {\pathtodir/err_bounds.txt};
					\addplot [color=green!70, mark = square, mark size = 1.2pt, mark options={solid}, densely dashed] table {\pathtodir/err_sol_2avgNN.txt};
					\addplot [color=green!70, mark = square*, mark size = 1.2pt] table {\pathtodir/err_sol_2avg.txt};
					\addplot [color=red, densely dashed, mark = o, mark size = 1.2pt,  mark options={solid}, densely dashed] table {\pathtodir/err_sol_3AndNN.txt}; 
					\addplot [color=red, mark = *, mark size = 1.2pt] table {\pathtodir/err_sol_3And.txt};
					\addplot [color=blue, mark = x, mark size = 1.9pt] table {\pathtodir/err_sol_1Tmap.txt}; 
					%				\addplot [color=purpleG, mark = triangle] table {\pathtodir/err_sol_4grad.txt}; 
					
					\addlegendentry{upper bound}
					\addlegendentry{Averaged}
					\addlegendentry{Averaged+Newton}
					\addlegendentry{Anderson}
					\addlegendentry{Anderson+Newton}
					\addlegendentry{T-map}
					%				\addlegendentry{Gradient}			
				\end{semilogyaxis}
			\end{tikzpicture}	
			\caption{Forward error $\max\, \{|x_i^{(k)}-y_i| / |y_i|$ for $1\leq i \leq n\}$}
			\label{fig:exp_atc_fwd}
		\end{subfigure}%
		\caption{Error comparison of six different strategies for computing Fitness Centrality on the Air Traffic Control network (directed)}
		\label{fig:exp_atc}
	\end{figure}

	\paragraph{BACI HS92--Y1996 trade network}
	In our last experiment, we study the trade network that has been the motivating example to develop the Economic Fitness Complexity \cite{def2EFC2013}. We consider the BACI dataset\footnote{Available at \url{https://www.cepii.fr/CEPII/en/bdd_modele/bdd_modele_item.asp?id=37}, January 2026 version.} HS92 for the year 1996 \cite{BACI}. Following the procedure described in \cite[Methods]{def1EFC2012}, we consider $n_c=213$ countries and $n_p=1241$ products and construct the country-product matrix $M$ of size $n_c\times n_p$. The underlying idea is that nodes $c$ and $p$ are connected, i.e., $M_{cp}=1$, if the country $c$ is a significant exporter of the product $p$. Then, we create a undirected, unweighted, and bipartite graph on $n_c+n_p = 1454$ vertices, with adjacency matrix 
	\[ A= \begin{pmatrix}0 & M \\ M^\top & 0\end{pmatrix}.\]
	
	The plots of the errors for the different strategies are shown in Figure~\ref{fig:exp_BACI96}. 
	The results of this experiment are qualitatively different compared to the previous ones. Indeed, all stationary methods exhibit a high convergence rate. The only method which stagnates is the gradient descent one, being suboptimal in this case. Among the stationary methods, the T-map strategy is the fastest one, followed closely by the Anderson strategy. From Figure~\ref{fig:exp_BACI96_fwd}, it is clear that the upper bound remains practically constant, overshooting by far the actual forward errors. 
	Using Newton steps favors convergence with fewer steps needed, as can be seen with the averaged+Newton and Anderson+Newton strategies. 
	%%% BACI 96, undirected, bipartite
	\begin{figure}
		\def\pathtodir{dataset/BACI96}
		\begin{subfigure}{1\textwidth}
			\begin{tikzpicture}
				\begin{semilogyaxis}[
					xlabel={iterations $k$},
					ylabel={backward error},
					width=13cm, height=6.5cm,
					ymin=10^-16, ymax=100,
					xmin=0,xmax=70,
					xlabel style={yshift=2mm, },
					xticklabel style={font=\small,},
					yticklabel style={font=\small,/pgf/number format/fixed,},
					legend pos = south west,
					legend cell align={left},
					legend style={font=\tiny, nodes={inner sep=1pt}},
					]
					
					\addplot[mark=none, dashed, thin, black, samples=2, domain=0:100] {1e-2}; \label{tolN}% horizontal line			
					\addplot[mark=none, dotted, thin, black, samples=2, domain=0:100] {1e-10}; \label{tol}% horizontal line			
										
					\addplot [color=blue, mark = x, mark size = 1.9pt] table {\pathtodir/err_iter_1Tmap.txt};				
					\addplot [color=green!70, mark = square, mark size = 1.2pt, mark options={solid}, densely dashed] table {\pathtodir/err_iter_2avgNN.txt};
					\addplot [color=green!70, mark = square*, mark size = 1.2pt] table {\pathtodir/err_iter_2avg.txt};
					\addplot [color=red, densely dashed, mark = o, mark size = 1.2pt,  mark options={solid}, densely dashed] table {\pathtodir/err_iter_3AndNN.txt};
					\addplot [color=red, mark = *, mark size = 1.2pt] table {\pathtodir/err_iter_3And.txt};
					\addplot [color=purpleG, mark = triangle] table {\pathtodir/err_iter_4grad.txt};
					
					\addlegendentry{\texttt{tol\_Newton}}
					\addlegendentry{target \texttt{tol}}
					\addlegendentry{T map}
					\addlegendentry{Averaged}
					\addlegendentry{Averaged+Newton}
					\addlegendentry{Anderson}
					\addlegendentry{Anderson+Newton}
					\addlegendentry{Gradient}
				\end{semilogyaxis}
			\end{tikzpicture}	
			\caption{Backward error  $\max\, \{|x_i^{(k)}-[T(x^{(k)})]_i| / |x_i^{(k)}|$ for $1\leq i \leq n\}$}
			\label{fig:exp_BACI96_bwd}
		\end{subfigure}%
		%	\hfill
		
		\begin{subfigure}{1\textwidth}%
			\begin{tikzpicture}
				\begin{semilogyaxis}[
					xlabel={iterations $k$},
					ylabel={forward error},
					width=13cm,height=6.5cm,
					ymin=10^-16,ymax=10^4,
					xmin=0,xmax=70,
					xlabel style={yshift=2mm, },
					xticklabel style={font=\small,},
					yticklabel style={font=\small,/pgf/number format/fixed,},
					legend pos = south west,
					legend cell align={left},
					legend style={font=\tiny, nodes={inner sep=1pt}},
					]
					
					\addplot [color=black, mark = none, dotted, line width = 1.5pt] table {\pathtodir/err_bounds.txt};
					\addplot [color=blue, mark = x, mark size = 1.9pt] table {\pathtodir/err_sol_1Tmap.txt}; 
					\addplot [color=green!70, mark = square, mark size = 1.2pt, mark options={solid}, densely dashed] table {\pathtodir/err_sol_2avgNN.txt};
					\addplot [color=green!70, mark = square*, mark size = 1.2pt] table {\pathtodir/err_sol_2avg.txt};
					\addplot [color=red, densely dashed, mark = o, mark size = 1.2pt,  mark options={solid}, densely dashed] table {\pathtodir/err_sol_3AndNN.txt}; 
					\addplot [color=red, mark = *, mark size = 1.2pt] table {\pathtodir/err_sol_3And.txt};
					\addplot [color=purpleG, mark = triangle] table {\pathtodir/err_sol_4grad.txt}; 
					
					\addlegendentry{upper bound}
					\addlegendentry{T map}
					\addlegendentry{Averaged}
					\addlegendentry{Averaged+Newton}
					\addlegendentry{Anderson}
					\addlegendentry{Anderson+Newton}
					\addlegendentry{Gradient}			
				\end{semilogyaxis}
			\end{tikzpicture}	
			\caption{Forward error $\max\, \{|x_i^{(k)}-y_i| / |y_i|$ for $1\leq i \leq n\}$}
			\label{fig:exp_BACI96_fwd}
		\end{subfigure}%
		\caption{Error comparison of six different strategies for computing Fitness Centrality on the BACI HS92-Y1996 trade network (undirected, bipartite)}
		\label{fig:exp_BACI96}
	\end{figure}
	
	\paragraph{Final remarks on experiments and computational aspects}
	In all the experiments, we have seen that using Newton's method significantly aids convergence. Therefore, we recommend using it for the final iterations steps, regardless of the previously used computational strategy.
	
	The Anderson strategy is usually faster than the averaged strategy, but it requires a good choice of parameters $q,m$. It might be necessary to try different combinations before achieving a satisfactory result. We recommend using Anderson as the initial method if convergence speed is of utmost importance; otherwise, the user may prefer the slower, but more stable, averaged strategy. 

Concerning computational costs, it is evident that both Anderson and especially Newton acceleration make for more expensive iterations. The overhead incurred depends on the size,
sparsity, and structural properties of the Jacobian for Newton's method, and on the choice of parameters for Anderson. Nevertheless, numerical experiments on larger networks,
both synthetic and from actual applications, clearly show the advantage of these acceleration techniques in terms of both CPU time and accuracy for sufficiently large networks
(typically, with a few thousands nodes).

	%\clearpage
	\section{Conclusions and open questions}\label{sec:conclusions}

	 In this paper we have established the existence and uniqueness 
	of the Fitness Centrality vector for general graphs and given a rigorous global convergence proof of the (regularized) Fitness Centrality iteration algorithm.
	Our analysis results in a bound on the error of the iterates that depends on the regularization parameter, showing that smaller values of this parameter can
	be expected to result in slow convergence. 
	Furthermore, we have proposed several strategies with improved convergence behavior, together acceleration strategies of Newton and Anderson type. These
	techniques significantly improve the robustness and overall efficiency of Fitness Centrality.  In the case of undirected graphs we have included comparisons with
	a gradient descent technique, which we found to be generally less efficient than the accelerated methods for general (non-bipartite) graphs.
	
	We conclude mentioning a few questions that could be addressed in further work.
	
	\begin{itemize}
		\item We have seen in Corollary~\ref{cor:limit of x^k is in E*} that the Fitness Centrality vector $y$ belongs to the domain $E_*$, which is, unfortunately, a large region. Is it possible to find more accurate estimates for the location of $y$, possibly depending on the nature of the graph? 
		\item In the introduction of the Non-Homogeneous Economic Fitness Complexity \cite{regularizedEFC2018}, the authors propose to use parameters $\phi_c$ and $\pi_p$ for each country-node and product-node respectively; they interpret $\phi_c$ as the intrinsic fitness of a country, and $\pi_p$ as an innovation threshold. Later, they set all parameters equal to $\delta$, leading to \eqref{eq:FC iteration}, with the goal of recovering the homogeneous Economic Fitness Centrality in the limit of small $\delta$. 
		
		A similar construction can be carried out for the Fitness Centrality of a generic graph. Consider a personalization vector $v$ and the corresponding iteration map 
		\[T_v(x) = v + A \cdot \inv(x).\]
		After iterating this map and reaching the fixed point of $T_v$, one may ask: is there an interpretation for the obtained Fitness Centrality with a personalization vector $v$? Can a particular choice of $v$ give insight on the nodes' centrality measures?
		
		\item  While for the majority of graphs we analyzed the stationary methods converge extremely slowly,
		there are a few graphs where they achieve fast convergence, such as the BACI trade network of Figure~\ref{fig:exp_BACI96}. 
		It would be interesting to characterize families of graphs for which rapid convergence of the basic iteration is attained. The rank of the adjacency matrix
		 of the graph might play a role here, with convergence being faster for matrices of relatively low rank.
	\end{itemize}
	
	\backmatter

	\bmhead{Acknowledgements}
	
	The authors are members of the Gruppo Nazionale di Calcolo Scientifico (GNCS) of the Istituto Nazionale di Alta
	Matematica (INdAM).  
		
		\bibliography{bibliography}
		
		\end{document}